\numberwithin{equation}{section}
\newtheorem{thm}{Theorem}
\newtheorem{lem}[thm]{Lemma}
\newtheorem{prop}[thm]{Proposition}
\newtheorem{coro}{Corollary}
\newdefinition{rmk}{Remark}
\numberwithin{thm}{section}
\numberwithin{coro}{section}
\journal{}
\begin{document}

\DeclarePairedDelimiter{\abs}{\lvert}{\rvert} 
\DeclarePairedDelimiter{\norm}{\lVert}{\rVert} 
\DeclarePairedDelimiter{\rbra}{\lparen}{\rparen} 
\DeclarePairedDelimiter{\cbra}{\lbrace}{\rbrace} 
\DeclarePairedDelimiter{\sbra}{\lbrack}{\rbrack} 
\DeclarePairedDelimiter{\abra}{\langle}{\rangle} 
\DeclarePairedDelimiter{\floor}{\lfloor}{\rfloor} 
\DeclarePairedDelimiter{\ceil}{\lceil}{\rceil} 

\begin{frontmatter}



\title{Numerical Reconstruction of Coefficients in Elliptic Equations Using Continuous Data Assimilation}

\author[label1]{PEIRAN ZHANG}
\affiliation[label1]{organization={Graduate School of Informatics, Kyoto University},
            addressline={zhangpeiran@acs.i.kyoto-u.ac.jp}, 
            city={Yoshida-Honmachi, Sakyo-ku},
            postcode={606-8501}, 
            state={Kyoto},
            country={Japan}}




\begin{abstract}
We consider the numerical reconstruction of the spatially dependent conductivity coefficient and the source term in elliptic partial differential equations in a two-dimensional convex polygonal domain, with the homogeneous Dirichlet boundary condition and given interior observations of the solution. Using data assimilation, we derive approximated gradients of the error functional to update the reconstructed coefficients. New $L^2$ error estimates are provided for the spatially discretized reconstructions. Numerical examples are given to illustrate the effectiveness of the method and demonstrate the error estimates. The numerical results also show that the reconstruction is very robust to the errors in specific inputted coefficients.
\end{abstract}



\begin{keyword}
coefficient reconstruction \sep continuous data assimilation \sep error estimate \sep elliptic partial differential equations

\end{keyword}

\end{frontmatter}



\section{Introduction}

We consider the problem of reconstructing the conductivity coefficient $q(x)$ and the source term $f(x)$ in elliptic partial differential equations with given spatially distributed solution data. To be specific, let $\Omega \subset \mathbb{R}^2$ be a convex polygonal domain, we consider recovering the coefficients $q(x)$ or $f(x)$ in
\begin{equation}
\begin{aligned}\label{eq:intro-1}
    - \nabla \cdot (q(x) \nabla u) + \nabla \cdot (b(x) u) + c(x) u &= f(x),  &&\text{in } \Omega \\
    u &= 0,  &&\text{on } \partial \Omega
\end{aligned}
\end{equation}
with an interpolant, $I_h u$, of $u$ given. The coefficients are assumed to be 
\begin{equation}\label{eq:para-assumption}
\begin{aligned}
    q &\in \mathcal{Q} := \{q \in H^1(\Omega): 0<q_0 \leq q(x) \leq q_1 \;a.e.\},\\
    b &= (b_1, b_2)^\intercal \text{ with } b_j \in H^1(\Omega), \, j=1,2,\\
    c &\in L^\infty (\Omega), \,\text{with}\, 0\leq  \,c(x) \leq c_1 \; a.e.,\\
    f &\in L^2(\Omega).
\end{aligned}
\end{equation}
Under these conditions (\ref{eq:intro-1}) admits a unique weak solution $u\in H^1_0(\Omega)$ (see, e.g. \cite{evans2010partial}).

Equation (\ref{eq:intro-1}) covers a wide range of practical applications. For example, when $b = c=0$ (\ref{eq:intro-1}) may correspond to the steady state of the model for the percolation of groundwater in a confined aquifer, with $q$ the hydraulic conductivity of the aquifer, $u$ the piezometric head and $f$ the recharge (\cite{knowles1996variational}, \cite{anderson2015applied}). When $b = 0$, (\ref{eq:intro-1}) is the equation of the same time-dependent model with the Laplace transform applied to $u$ \cite{knowles2001parameter}. When $c = 0$, (\ref{eq:intro-1}) is the steady state of the transport equation that models, for example, the distribution of biological species with $q$ the diffusion coefficient\cite{banks2012estimation}. For many other applications, we refer to \cite{banks2012estimation}.

The method proposed in this work regards (\ref{eq:intro-1}) as the steady state of the corresponding time-dependent parabolic equation, allowing the techniques to be applied naturally to time-dependent problems. To be specific, we treat (\ref{eq:intro-1}) as the steady state of the parabolic equation
\begin{equation}\label{eq:intro-2}
\begin{aligned}
    u_t - \nabla \cdot (q \nabla u) + \nabla \cdot (b u) + c u &= f,  &&\text{in } (0,T] \times \Omega \\
    u(0) &= u_0,  &&\text{in } \Omega\\
    u &= 0,  &&\text{on }  [0,T] \times \partial \Omega
\end{aligned}
\end{equation}
and we shall utilize the continuous data assimilation (CDA) equation \cite{azouani2014continuous}, \cite{carlson2020parameter}
\begin{equation}\label{eq:intro-3}
\begin{aligned}
    v_t - \nabla \cdot (\tilde{q} \nabla v) + \nabla \cdot (\tilde{b} v) + \tilde{c} v &= \tilde{f} + \mu I_h (u-v),  &&\text{in } (0,T] \times \Omega\\
    v(0) &= v_0,  &&\text{in } \Omega\\
    v &= 0,  &&\text{on } [0,T] \times \partial \Omega
\end{aligned}
\end{equation}
where $I_h u$ is given with $I_h$ an $L^2$ projection on a discrete mesh of mesh size $h$, $\mu$ is a positive relaxation parameter, and $\mu I_h (u-v)$ is called a feedback term. 

CDA has been used in the constant coefficient recovery by \cite{carlson2020parameter} and \cite{newey2025model}. In this work, we extend the application of CDA in coefficient recovery from constant coefficients to spatially dependent ones. CDA essentially leverages the observation $I_h u$ and can be classified as a nudging method in data assimilation. Conventionally, nudging methods are designed for finite-dimensional systems \cite{asch2016data}. In \cite{azouani2014continuous}, the technique was introduced to dynamical systems in infinite-dimensional spaces and is known as the AOT (Azouani-Olson-Titi) algorithm in the literature. In \cite{carlson2020parameter}, this method is further used to identify a constant viscosity parameter in 2 dimensional Navier-Stokes equations, and was recently in \cite{newey2025model} extended to the recovery of constant parameters in other types of equations or systems (e.g. the Lorenz '63 model and Kuramoto-Sivashinsky equation), and the authors provided with us a framework to obtain the gradients of an input-output error functional conveniently thanks to the feedback term $\mu I_h (u-v)$.  For works on continuous data assimilation (the AOT algorithm) and coefficient recovery, we also refer to \cite{carlson2021dynamically}, \cite{martinez2022convergence} for convergence analysis for the recovery of constant parameters, \cite{ccibik2025data} for assimilation with errors in model taken into account, and \cite{yushutin2025continuous} for CDA using mean value and boundary projection as the feedback term.


Another novelty of this article is the new $L^2$ error estimates for the proposed reconstruction method using CDA. With the inspiration from \cite{bonito2017diffusion} and \cite{hoffmann1985identification}, we derive a sub-linear $L^2$ error estimate for the discretized reconstruction of $q\in H^m(\Omega)$ ($m\geq 1$), and a linear $L^2$ error estimate for the discretized reconstruction of $f \in L^2(\Omega)$. In particular, the parameter $\mu$ influences the reconstruction error in a manner similar to that of the regularization parameter in the classical Tikhonov regularization (see, e.g. \cite{kirsch2011introduction}), and a wide range of values $\mu$ produces satisfactory reconstructions, so there is no difficulty choosing a suitable $\mu$. In addition, the proposed error estimates account for errors in both solution data and coefficients of the PDE model, which is rarely seen in the present works on this topic. Until now, there exists a vast amount of literature investigating the problem of coefficient reconstruction in elliptic equations with Dirichlet or Neumann boundary conditions and interior measurement of $u$, for example \cite{cen2024numerical}, \cite{jin2021error}, \cite{acar1993identification}, \cite{hanke1997regularizing}, \cite{knowles2001parameter}, \cite{chan2003identification}, \cite{al2012stability}, \cite{kohn1988variational}, and in \cite{bonito2017diffusion} the authors present a Hölder stability of the reconstruction problem for $q$ when $b = c = 0$ under suitable regularity of the domain $\Omega$. The conventional reconstruction methods include (i) minimizing an output least squares error, that is, minimizing $\|v - K u\|$ for some observation $K u$ of $u$ in a norm $\|\cdot\|$, (ii) minimizing an equation error, that is, minimizing $\Vert\nabla \cdot (\tilde{q} \nabla v) + f\Vert$ in the case $b=c=0$ for example, (iii) introduce a special error functional with certain desirable properties and minimize it. 
There are also authors who treat the origin problem as a first-order hyperbolic equation for $q$, instead of an equation for $u$ \cite{richter1981numerical}. In practical computations, regularization terms should usually be added to the objective function by an output error or an equation error to avoid instability, and discretizations should be made to obtain an approximation $\tilde{q}_{h_q}$ or $\tilde{f}_{h_f}$ ($h_q$ or $h_f$ is the mesh size for the discrete reconstruction). However, in the aspect of error estimates, the regularization terms and the discretization of coefficients are not always considered, and explicit estimates for the relationship between the $L^2$ norm of the error $\tilde{q}_{h_q} - q$ and the noise in the input are difficult. For example, in \cite{falk1983error}, an error $L^2$ estimate for the discretized reconstruction of the coefficient is derived for an equation error method without regularization; in \cite{kohn1988variational}, assuming the $q \in H^2 (\Omega)$ and $u \in H^3(\Omega)$ for an special error functional and the resulting discretized reconstruction, an $L^2$ estimate is derived in the case of no regularization and an weighted estimate of $\int_\Omega |q_{h_q}-q| |\nabla u|^2$ is derived in the case of an $H^1$ regularization term added; in \cite{jin2021error} and \cite{cen2024numerical}, $L^2$ estimates for the discretized reconstruction with $H^1$ regularization terms are derived, but in a form of $\|\tilde{q}_{h_q} - q\|\leq c\delta^\frac{1}{4(1+\beta)}$ where $\delta$ is the noise level and $\beta \geq 0$ is the constant in a \textit{positivity condition} (see also \cite{bonito2017diffusion}) that depends on the regularity of the underlying domain, and this convergence rate is reported to be suboptimal in \cite{jin2021error}. For the positivity condition, let us mention that we shall also use a positivity condition in this work to prove the error estimates, but it can be verified directly using $u$ without the knowledge of the coefficients, unlike that in \cite{bonito2017diffusion} or \cite{cen2024numerical}, which involves $q$ and $f$. 

We should also emphasize that the proposed approach avoids numerically solving an adjoint problem to obtain the gradient of $J$ as in many conventional methods (see \cite{hasanouglu2021introduction} for examples). In this work, the solution $v$ of (\ref{eq:intro-3}) will be shown to be an approximation to the solution $u$ of (\ref{eq:intro-2}) for relatively large $\mu$, and in particular this approximation property is also true for the steady state, i.e., the elliptic equation (\ref{eq:intro-1}) that we want to investigate. To reconstruct the coefficients $q$ and $f$, an easily computed approximated gradient formula for the error functional $J = \frac{1}{2}\| I_h u - I_h v_{\tilde{q} , \tilde{f}} \|_{L^2}^2$ is derived, where $v_{\tilde{q}, \tilde{f}}$ solves
\begin{equation}\label{eq:intro-4}
\begin{aligned}
    - \nabla \cdot (\tilde{q} \nabla v) + \nabla \cdot (\tilde{b} v) + \tilde{c} v &= \tilde{f} + \mu I_h (u-v),  &&\text{in } \Omega \\
    v &= 0,  &&\text{on } \partial \Omega
\end{aligned}
\end{equation}
and the gradient formula is used to update the coefficient $\tilde{q}$ or $\tilde{f}$ in the finite element spaces, and we do not need to compute an adjoint problem to obtain the gradient. It is also interesting that the approximated gradient formula for the reconstruction of $q$ turns out to be the same up to a constant as that in \cite{hoffmann1985identification}, although the approach in \cite{hoffmann1985identification} was not identified as a minimization for some error functional. In \cite{hoffmann1985identification}, the author embeds the elliptic equation into a parabolic equation by adding an $\varepsilon u_t$ to (\ref{eq:intro-1}), while in our case (\ref{eq:intro-1}) remains time-independent in numerical computations, but a feedback term is added to obtain an additional assimilation equation. 

Numerical examples are provided as well, and they on the one hand verify our theoretical error estimates, and on the other hand exhibit a notable feature of this method that a most rough estimation in $b$ and $c$ (suitable constants, for example) used as the inputted $\tilde{b}$ and $\tilde{c}$ still yields decent reconstruction accuracy. The latter is helpful for real-world coefficient reconstructions, since in many cases we have little knowledge about $b$ and $c$. However, the numerical experiments also reveal that the reconstructions using fine meshes tend to be unstable. 

We also emphasize that, although the target of our current work is coefficient reconstruction in elliptic equations, the essentially same approach can be naturally applied to the corresponding parabolic equations, as it has already been seen that our analysis departs from them.

The remainder of the paper is organized as follows. 
In Section \ref{sect3}, the $L^2$ norm and $H^1$ semi-norm error estimates for the assimilation are proved for both elliptic and parabolic equations, and some useful properties, mainly the Lipschitz continuity, of the coefficient-to-solution maps are discussed. In Section \ref{sect4}, approximated gradients of our error functional are derived to update the reconstructed coefficients, and error estimates for such minimization formulations, which do not use exact gradients in updates, are given. In Section \ref{sect5}, numerical results are shown for the reconstruction of $q$ and $f$.


We end the introduction by briefly stating some notation and recalling some results for the polynomial approximation properties. Recall that $\Omega$ is set to be a convex polygon. We denote by $\| \cdot \|_m$ and $|\cdot|_m$ the norms and the semi-norms of $H^m(\Omega)$ where $m\geq 0$, and by $\|\cdot\| = \|\cdot\|_0$ the $L^2(\Omega)$ norm. The dual space of $H^1_0(\Omega)$ is denoted $H^{-1}(\Omega)$, with the norm $\|u\|_{-1} = \sup_{\varphi\in H_0^1(\Omega)} \int_\Omega u \varphi \mathrm{d} x/\Vert\varphi\Vert$ for $u \in H^{-1}(\Omega)$. All these spaces are considered over $\mathbb{R}$. The inner product in $L^2(\Omega)$ will then be written as $(f, g) = \int_\Omega f g \mathrm{d} x$ for $f, g \in L^2(\Omega)$. For $u \in H_0^1 (\Omega)$, we have the Poincar\'e's inequality $\|u\| \leq C_P |u|_1$ where we denote the constant by $C_P$. We also assume the elliptic regularity for the solution $u$ of (\ref{eq:intro-1}) with the domain $\Omega$ and the coefficients that we consider, that is, we assume $\|u\|_2 \leq C \|f\|$ for the solution $u$ of (\ref{eq:intro-1}) and some $C>0$. In the following parts, we shall drop $\mathrm{d}x$ in the integrals if no confusion may occur.

Let $\mathcal{T}_{h}$  ($h<1$ the mesh size) be a quasi-uniform (\cite{brenner2008mathematical}) subdivision of $\Omega$, and for $\mathcal{T}_h$ let $\mathcal{P}_k (\mathcal{T}_{h})$ be the space of functions that is a Lagrange polynomial of order $k$ in each $T \in \mathcal{T}_h$. Denote by $I_{h}: L^2 (\Omega) \to V_h = \mathcal{P}_k (\mathcal{T}_{h})$ the $L^2$ projection satisfying
\begin{equation*}
    (I_h w - w, v_h) = 0, \quad \forall w\in L^2(\Omega),\, v_h \in V_h.
\end{equation*}
 We have the approximation property \cite{brenner2008mathematical},
\begin{equation*}
    \|I_{h} u - u \|_s \leq C_{A} h^{m-s} | u |_m, \quad u \in H^m(\Omega) , \quad 0\leq s\leq m \leq k.
\end{equation*}
And we have the inverse estimate in the finite-dimensional space $V_h$ \cite{brenner2008mathematical}
\begin{equation}\label{eq:inverse-estimate}
    \|v_h \|_1 \leq C_{\text{inv}} h^{-1} \| v_h \|, \quad v_h \in V_h,
\end{equation}
where $C_{\text{inv}}$ depends on the regularity of the subdivisions.

\section{Estimates for assimilation error and the some properties of the coefficient-to-solution maps}\label{sect3}

\subsection{Estimates for assimilation error}

Let $u$, $v$ be the solutions of
\begin{equation}
\begin{aligned}\label{eq:plm-1}
    u_t - \ \nabla \cdot \rbra[\big]{q(x) \nabla u } + \nabla \cdot \bigl( b(x) u \bigr) + c(x) u &= f(t, x), &&\text{in } (0, T] \times \Omega \\
    u &= 0,  &&\text{on } [0, T] \times \partial \Omega  \\
    u(0) &= u_0, &&
\end{aligned}    
\end{equation}
and
\begin{equation}
\begin{aligned}\label{eq:plm-2}
    v_t - \nabla \cdot \Bigl(\bigl(q(x) + \delta q(x)\bigr) \nabla v\Bigr) + \nabla \cdot \Bigl( \bigl(b(x) + & \delta b(x)\bigr) v \Bigr) + \bigl(c(x) + \delta c(x)\bigr) v &&\\
    &= f(t, x) + \delta f(t, x) + \mu I_h (u - v) \quad &&\text{in } (0, T] \times \Omega  \\
    v &= 0 &&\text{on } [0, T] \times \partial \Omega \\
    v(0) &= v_0, &&   
\end{aligned}  
\end{equation}
where $q$, $q + \delta q \in \mathcal{Q}$, $b_j, \delta b_j\in H^1(\Omega)$, $c, c + \delta c$ are non-negative a.e. with an essential supremum $c_1$, and $f, \delta f \in L^2 (0, T; L^2 (\Omega))$, and $I_h: L^2(\Omega) \to \mathcal{P}_k(\mathcal{T}_h)$ with $k \geq 1$ is an $L^2$ projection. Note that a more general case that $f$ can depend on $t$ is treated here, while in (\ref{eq:intro-2}) and (\ref{eq:intro-3}) the $f$ is independent of $t$. The unique solvability of (\ref{eq:plm-1}) can be found in, e.g. \cite{evans2010partial}. The existence of a unique solution to (\ref{eq:plm-2}) should be able to be verified using a Galerkin approximation technique (\cite[Chapter 7]{evans2010partial}). 

It is naturally expected that $v(t)$ converges to $u(t)$ in $L^2$ for large $t$ and $\mu$ if all coefficients and the source term in (\ref{eq:plm-2}) are taken to be exact, but inspired by \cite{ccibik2025data}, we see that the approximation is valid even if errors are presented in the coefficients and the source term in (\ref{eq:plm-2}).

\begin{lem}\label{lemma1}
    Denote the error by $w = u - v$. For $t>0$, if $\mu$ is taken large enough and $h$ is taken small enough such that $(\frac{3}{4}-2C_P^{-2}C_A^{2}h^2)\mu \geq 2C_P^{-2}(b_1 C_P -q_0)-2c_0$, we have
    \begin{align}\label{eq:assimilation-err-1}
    \lVert w(t) \rVert^2 \leq & \lVert w(0) \rVert^2 e^{-\mu t/4} \nonumber\\
    & \quad + \frac{4}{\mu} \int_0^t \Bigl(\lVert\delta f(s) \rVert^2 + C_P^{-2} \lVert \delta b u(s) \rVert^2  + \lVert \delta c u(s) \rVert^2 + \norm{ \nabla \cdot(\delta q \nabla u(s)) }_{-1}\Bigr) e^{\mu (s - t)/4} \mathrm{d} s.
    \end{align}
\end{lem}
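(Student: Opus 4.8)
The plan is to reduce everything to a single differential inequality $\frac{d}{dt}\|w\|^2+\frac{\mu}{4}\|w\|^2\le G(t)$ for the error $w=u-v$ and then to integrate it with the integrating factor $e^{\mu t/4}$, which produces \eqref{eq:assimilation-err-1} at once. First I would subtract \eqref{eq:plm-2} from \eqref{eq:plm-1}, arranging each coefficient difference so that the perturbations multiply the \emph{exact} solution $u$ rather than $v$. Using $q\nabla u-(q+\delta q)\nabla v=(q+\delta q)\nabla w-\delta q\nabla u$ and the analogous identities for the convection and reaction terms, $w$ solves
\[
w_t-\nabla\cdot\bigl((q+\delta q)\nabla w\bigr)+\nabla\cdot\bigl((b+\delta b)w\bigr)+(c+\delta c)w+\mu I_h w=-\delta f-\nabla\cdot(\delta q\nabla u)+\nabla\cdot(\delta b u)+\delta c u .
\]
This is the crucial bookkeeping step: it isolates exactly the four source quantities $\delta f$, $\nabla\cdot(\delta q\nabla u)$, $\delta b u$ and $\delta c u$ that appear on the right of \eqref{eq:assimilation-err-1}, while leaving a homogeneous operator with the \emph{perturbed} coefficients acting on $w$.

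I would then test against $w\in H_0^1(\Omega)$ and integrate by parts. The left-hand side gives $\tfrac12\frac{d}{dt}\|w\|^2+\int(q+\delta q)|\nabla w|^2+\int(c+\delta c)w^2+\mu(I_hw,w)$. The feedback term is handled by the defining orthogonality of the $L^2$ projection: $(I_hw,w)=\|I_hw\|^2=\|w\|^2-\|w-I_hw\|^2$, and the approximation bound $\|w-I_hw\|\le C_A h|w|_1$ then turns it into the dissipation $\mu\|w\|^2$ at the price of $\mu C_A^2h^2|w|_1^2$. Coercivity of the diffusion gives $\int(q+\delta q)|\nabla w|^2\ge q_0|w|_1^2$ and the reaction term is bounded below by $c_0\|w\|^2$. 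On the right, $\delta c u$ and $\delta f$ pair with $\|w\|$ and are absorbed into the feedback dissipation by Young's inequality, each spending a fraction $\tfrac{\mu}{8}\|w\|^2$; the term $\nabla\cdot(\delta q\nabla u)$ is kept in $H^{-1}$ and estimated through the duality pairing $\langle\nabla\cdot(\delta q\nabla u),w\rangle\le\|\nabla\cdot(\delta q\nabla u)\|_{-1}\|w\|$, which is exactly what allows $\delta q$ to be merely $H^1$ (no second derivatives of $u$ are ever needed), and it too is absorbed with a further $\tfrac{\mu}{8}\|w\|^2$.

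The remaining, first-order terms are the delicate ones and are measured at the level of $|w|_1$: the transport term $\int(b+\delta b)w\cdot\nabla w$ is bounded by $b_1\|w\|\,|w|_1\le b_1C_P|w|_1^2$ via Poincar\'e, and the perturbation $\delta b u$ by $\|\delta b u\|\,|w|_1$ after integrating the divergence by parts. After Young's inequalities these feed the coefficient of $|w|_1^2$, which becomes essentially $q_0-\mu C_A^2h^2-b_1C_P$; transferring the surplus of this quantity back to $\|w\|^2$ through $|w|_1^2\ge C_P^{-2}\|w\|^2$ and collecting the feedback that remains after the three $\tfrac{\mu}{8}$ allocations (namely $\tfrac{3}{8}\mu$) together with $c_0$, one sees that the hypothesis $(\tfrac34-2C_P^{-2}C_A^2h^2)\mu\ge 2C_P^{-2}(b_1C_P-q_0)-2c_0$ is precisely the inequality guaranteeing a nonnegative net coefficient of $|w|_1^2$ and at least $\tfrac{\mu}{4}\|w\|^2$ of retained dissipation. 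This delivers $\frac{d}{dt}\|w\|^2+\frac{\mu}{4}\|w\|^2\le\frac{4}{\mu}\bigl(\|\delta f\|^2+C_P^{-2}\|\delta b u\|^2+\|\delta c u\|^2+\|\nabla\cdot(\delta q\nabla u)\|_{-1}\bigr)$, after which multiplication by $e^{\mu t/4}$ and integration over $(0,t)$ gives the claimed estimate.

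I expect the main obstacle to be exactly this balance at the $|w|_1^2$ level. The projection-error contribution $\mu C_A^2h^2|w|_1^2$ grows with $\mu$, whereas the only gradient dissipation, $q_0|w|_1^2$, is fixed; hence one cannot simply send $\mu\to\infty$, and the diffusion coercivity must simultaneously cover the transport term $b_1C_P|w|_1^2$ and the Young residues of the first-order perturbations before any surplus can be converted to $\|w\|^2$. Calibrating the Young parameters and the allocation of the feedback so that the constants $\tfrac{4}{\mu}$ and $C_P^{-2}$ and the precise smallness--largeness threshold on $h$ and $\mu$ emerge is the part that requires the most care; everything else is the standard energy estimate together with Gr\"onwall's inequality.
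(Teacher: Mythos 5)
Your proposal is correct and follows essentially the same route as the paper: the same error equation with perturbations isolated against $u$, the same energy estimate tested with $w$, the identity $(I_h w,w)=\|I_h w\|^2\ge\|w\|^2-C_A^2h^2|w|_1^2$, Young and Poincar\'e to absorb the lower-order and transport terms into the feedback dissipation, and integration of the resulting differential inequality. The only differences are cosmetic choices in allocating the Young parameters (the paper's own constant bookkeeping is similarly loose), so no further comparison is needed.
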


\begin{proof}
    We denote $\tilde{q} = q+\delta q$, $\tilde{b} = b+\delta b$ and $\tilde{c} = c+\delta c$. Since $w = u - v$, it satisfies 
    \begin{equation}\label{eq:assimilation-err-1}
    w_t - \nabla \cdot (\tilde{q} \nabla w) + \nabla \cdot (\tilde{b} w) + \tilde{c} w + \mu I_h w = - \delta f - \nabla \cdot (\delta q \nabla u) +  \nabla \cdot (\delta b u) + \delta c u,     
    \end{equation}
    with $w=0$ on $\partial \Omega$. Taking $L^2$ inner product using (\ref{eq:assimilation-err-1}) with $w$, we have
    \begin{align*}
    & \frac{1}{2} \frac{\mathrm{d}}{\mathrm{d} t} \norm{w}^2 + (\tilde{q} \nabla w, \nabla w) + (\nabla \cdot (\tilde{b} w), w) + (\tilde{c} w, w) + \mu (I_h w, w) \\
    =\,& (- \delta f, w) - (\nabla \cdot (\delta q \nabla u), w) + (\delta b u, \nabla \cdot w) + (\delta c u, w).
    \end{align*}
    Since $I_h$ is assumed to be a $L^2$ projection, we have
    \begin{align*}
    (I_h w, w) = (I_h w, w - I_h w + I_h w) = (I_h w, I_h w) = \norm{I_h w}^2,
    \end{align*}
    and also, using $\norm{w - I_h w} \leq C_A h \abs{w}_1$,
    \begin{equation*}
    \norm{w}^2 = \norm{w - I_h w}^2 + \norm{I_h w}^2 \leq C_A^2 h^2 \abs{w}^2_1 + \norm{I_h w}^2.
    \end{equation*}
    Then $q \geq q_0$ and $c\geq c_0$ yield
    \begin{align*}
    & \frac{\mathrm{d}}{\mathrm{d} t} \norm{w}^2 + 2 q_0 \abs{w}_1^2 + 2 c_0 \norm{w}^2 +  2 \mu\rbra[\Big]{-C_A^2 h^2 \abs{w}_1^2 + \norm{w}^2} \\
    \leq \,& 2 \rbra[\Big]{ \norm{\delta f} + \norm{\delta c u} } \norm{w} + 2 \norm{\nabla \cdot(\delta q \nabla u)}_{-1} \|w\|_1 + 2\norm{\delta b u} \abs{w}_1  +2 b_1 \norm{w} \abs{w}_1,
    \end{align*}
    which gives
    \begin{align*}
    & \frac{\mathrm{d}}{\mathrm{d} t} \norm{w}^2 + \rbra[\big]{2 q_0 - 2 \mu h^2 C_A^2} \abs{w}_1^2 + (2 c_0 + 2 \mu) \norm{w}^2 \\
    \le & \:\frac{4}{\mu} \Bigl(\|\delta f\|^2 + \|\delta c u\|^2 + \|\nabla \cdot(\delta q \nabla u)\|_{-1}^2\Bigr) + \frac{4}{\mu C_P^2} \|\delta b u\|^2 + \frac{\mu}{2} \|w\|^2  + \frac{1}{4}\mu \|w\|_1^2 + \frac{C_P^2}{4}\mu |w|_1^2 + 2b_1 C_P |w|_1^2\\
    \le & \:\frac{4}{\mu} \Bigl(\|\delta f\|^2 + \|\delta c u\|^2 + \|\nabla \cdot(\delta q \nabla u)\|_{-1}^2\Bigr) + \frac{4}{\mu C_P^2} \|\delta b u\|^2  + \frac{\mu}{2} \|w\|^2 + \frac{3C_P^2}{4}\mu |w|_1^2 +2 b_1 C_P |w|_1^2
    \end{align*}
    Using Poincar\'e's inequality with $C_P$ the constant in the inequality, we then have
    \begin{align*}
    & \frac{\mathrm{d}}{\mathrm{d} t} \norm{w}^2 + \left[2 C_P^{-2}(q_0 - \mu C_A^2 h^2 - b_1 C_P) + 2 c_0 + \frac{1}{2}\mu\right] \norm{w}^2 \\
    \le & \: \frac{4}{\mu} \rbra[\Big]{\norm{\delta f}^2 + C_P^{-2} \norm{\delta b u}^2  + \norm{\delta c u}^2 + \norm{\nabla \cdot(\delta q \nabla u)}_{-1}}
    \end{align*}
    Thus if $\mu$ and $h$ fulfill $2C_P^{-2} \rbra[\big]{q_0 - \mu C_A^2 h^2 - b_1 C_P} + 2c_0 + \mu\geq \frac{1}{4}\mu$, we will have
    \begin{align*}
    \frac{\mathrm{d}}{\mathrm{d} t} \norm{w}^2 + \frac{1}{4}\mu\norm{w}^2 & \le \frac{4}{\mu} (\norm{\delta f}^2 + C_P^{-2} \norm{\delta b u}^2  + \norm{\delta c u}^2 + \norm{\nabla \cdot(\delta q \nabla u)}_{-1}).
    \end{align*}
    Now integrating both sides leads to the conclusion.  

\end{proof}

\begin{rmk}
    As can be seen from the proof, lower regularities for the coefficients with $q\in L^\infty(\Omega)$ ($q_0 \leq q \leq q_1$ a.e.) and $b_j \in L^\infty (\Omega)$ is enough to prove Lemma \ref{lemma1}. 
\end{rmk}

Assuming further regularity of the solutions, we have the next estimate for the assimilation error in the $H^1$ semi-norm. 

\begin{lem}\label{lemma2}
    Assume that $u, v \in L^2(0, T; H^2 (\Omega) \cap H^1_0(\Omega))$ and $\frac{\mathrm{d}}{\mathrm{d}t} u, \frac{\mathrm{d}}{\mathrm{d}t} v \in L^2(0, T; H^1_0(\Omega))$. For $t>0$, if $\mu \geq 2 + 4\rbra[\Big]{(1+C_P)^2 \|b+\delta b\|_1^2 + 2c_1 C_P + |q+\delta q|_1^2}/q_0$, and $h^2 \leq C_A^2q_0/2\mu^2$, then we have for $w = u-v$ the error between solutions of (\ref{eq:plm-1}) and (\ref{eq:plm-2}) that
    \begin{align}\label{eq:lemma2-est}
        |w(t)|_1^2 \leq |w(0)|_1^2 e^{-\mu t/2} &+ \frac{1}{\mu} \int_0^t  |\delta c u(s)|_1^2 e^{\mu(s-t)/2} \mathrm{d}s \nonumber\\
        &+ \frac{3}{q_0} \int_0^t  \bigl(|\delta b u(s)|_1^2 + \|\nabla \cdot (\delta q \nabla u(s))\|^2 + \|\delta f(s)\|^2\bigr) e^{\mu(s-t)/2} \mathrm{d}s.
    \end{align}
\end{lem}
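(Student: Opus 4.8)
The plan is to carry out the energy method in the $H^1$ semi-norm by testing the evolution equation satisfied by $w$ (namely \eqref{eq:assimilation-err-1}) against $-\Delta w$. This is legitimate because the assumed regularity places $w(t)\in H^2(\Omega)\cap H^1_0(\Omega)$ for a.e. $t$, and on the convex polygon $\Omega$ elliptic regularity gives $\|w\|_2\leq C\|\Delta w\|$, so that $\|\Delta w\|$ controls the full $H^2$ norm. The time-derivative term integrates by parts to $(\nabla w_t,\nabla w)=\tfrac12\tfrac{d}{dt}|w|_1^2$ (the boundary term vanishing since $w_t\in H^1_0(\Omega)$), which produces exactly the leading quantity of \eqref{eq:lemma2-est} with unit coefficient. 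The principal diffusion term becomes $(\nabla\cdot(\tilde q\nabla w),\Delta w)=\int\tilde q|\Delta w|^2+\int(\nabla\tilde q\cdot\nabla w)\Delta w\geq q_0\|\Delta w\|^2+\int(\nabla\tilde q\cdot\nabla w)\Delta w$, isolating the coercive quantity $q_0\|\Delta w\|^2$ that will absorb most of the remaining terms.

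The feedback term is the mechanism producing decay: writing $I_h w=w-(w-I_h w)$ gives $\mu(I_h w,-\Delta w)=\mu|w|_1^2+\mu(w-I_h w,\Delta w)$, so the first piece supplies the dissipative $\mu|w|_1^2$, while the second is an approximation error controlled by $\|w-I_h w\|\leq C_A h|w|_1$ (or $C_A h^2|w|_2$) together with the smallness assumption $h^2\leq C_A^2 q_0/2\mu^2$, and is then swallowed by $q_0\|\Delta w\|^2$. On the right-hand side of \eqref{eq:assimilation-err-1} I would pair $\delta f$, $\nabla\cdot(\delta q\nabla u)$ and $\nabla\cdot(\delta b u)$ directly with $\Delta w$, using Cauchy--Schwarz and Young's inequality together with $\|\nabla\cdot(\delta b u)\|\lesssim|\delta b u|_1$; each leaves a multiple of $\|\Delta w\|^2$ (absorbed into $q_0\|\Delta w\|^2$) and contributes one of the terms $\|\delta f\|^2$, $\|\nabla\cdot(\delta q\nabla u)\|^2$, $|\delta b u|_1^2$ of the $3/q_0$ group. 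The term $\delta c u$ must be treated differently: since $u\in H^1_0(\Omega)$ we have $\delta c u=0$ on $\partial\Omega$, so integration by parts gives $-(\delta c u,\Delta w)=(\nabla(\delta c u),\nabla w)\leq|\delta c u|_1\,|w|_1$, and Young's inequality $|\delta c u|_1|w|_1\leq\tfrac{\mu}{4}|w|_1^2+\tfrac1\mu|\delta c u|_1^2$ absorbs the first part into the dissipation and leaves precisely the $\tfrac1\mu|\delta c u|_1^2$ term.

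The main obstacle is the control of the variable-coefficient cross term $\int(\nabla\tilde q\cdot\nabla w)\Delta w$ and of the convection term $-(\nabla\cdot(\tilde b w),\Delta w)$ with coefficients of only $H^1$ regularity in two dimensions: naively one wants $\nabla w\in L^\infty$, which lies just outside the scope of $w\in H^2(\Omega)$ in 2D, so the estimates must be pushed through by the two-dimensional Sobolev embeddings ($H^2(\Omega)\hookrightarrow L^\infty(\Omega)$ for the factor $w$ itself, and an $H^1(\Omega)\hookrightarrow L^4(\Omega)$, Ladyzhenskaya-type embedding for the gradient factors) combined with elliptic regularity and Young's inequality, splitting each bound into a piece controlled by $q_0\|\Delta w\|^2$ and a piece controlled by $|w|_1^2$. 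It is exactly here that the quantities $|q+\delta q|_1^2$, $(1+C_P)^2\|b+\delta b\|_1^2$ (the factor $(1+C_P)$ converting $\|w\|_1$ into $|w|_1$ via Poincar\'e, since $\|w\|_1\leq(1+C_P)|w|_1$) and the $c_1 C_P$-dependence from the reaction term $(\tilde c w,-\Delta w)$ (using $0\le c\le c_1$ and Poincar\'e) enter, and the lower bound on $\mu$ is chosen precisely so that, after all these absorptions, a clean dissipative term of size $\tfrac{\mu}{2}|w|_1^2$ survives.

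Collecting everything yields the differential inequality $\tfrac{d}{dt}|w|_1^2+\tfrac{\mu}{2}|w|_1^2\leq\tfrac1\mu|\delta c u|_1^2+\tfrac3{q_0}\bigl(|\delta b u|_1^2+\|\nabla\cdot(\delta q\nabla u)\|^2+\|\delta f\|^2\bigr)$. Multiplying by the integrating factor $e^{\mu t/2}$, so that the left-hand side becomes $\tfrac{d}{dt}\bigl(e^{\mu t/2}|w|_1^2\bigr)$, and integrating over $[0,t]$ produces the kernel $e^{\mu(s-t)/2}$ and gives exactly \eqref{eq:lemma2-est}. The only further care needed is that the source seminorms $|\delta c u(s)|_1$, $|\delta b u(s)|_1$ and $\|\nabla\cdot(\delta q\nabla u(s))\|$ be finite for a.e. $s$, which is where the assumed $H^2$-regularity of $u$ (and the differentiability of the coefficient perturbations) is used.
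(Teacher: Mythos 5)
Your proposal follows essentially the same route as the paper's proof: testing the error equation against $-\Delta w$ (equivalently, integrating against $\Delta w$), splitting the feedback term via $(I_h w,\Delta w)=(I_h w-w,\Delta w)+(w,\Delta w)$ with the approximation bound $\|I_h w-w\|\leq C_A h|w|_1$, integrating by parts on the $\delta c\,u$ term to obtain the $|\delta c\, u|_1|w|_1$ pairing, absorbing the lower-order and cross terms into $q_0\|\Delta w\|^2$ and the $\mu|w|_1^2$ dissipation via Young's inequality, and concluding with the integrating factor $e^{\mu t/2}$. If anything, you are more careful than the paper in flagging that the bounds on $(\nabla\tilde q\cdot\nabla w,\Delta w)$ and $(\nabla\cdot(\tilde b w),\Delta w)$ with only $H^1$ coefficients in two dimensions require Sobolev/Ladyzhenskaya embeddings and elliptic regularity on the convex polygon, a point the paper passes over silently.
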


\begin{proof}
    Denote $\tilde{q} = q+\delta q$, $\tilde{b} = b+\delta b$ and $\tilde{c} = c+\delta c$. Then $w$ satisfies
    \[
    w_t - \nabla \cdot (\tilde{q} \nabla w) + \nabla \cdot (\tilde{b} w) + \tilde{c} w + \mu I_h w = - \delta f - \nabla \cdot (\delta q \nabla u) + \nabla \cdot (\delta b u) + \delta c u.
    \]
    By integrating both sides with $\Delta w$ and using Cauchy-Schwartz's inequality, we obtain
    \begin{align*}
        &\frac{1}{2} \frac{\mathrm{d}}{\mathrm{d} t} \abs{w}_1^2 + (\nabla \cdot (\tilde{q} \nabla w), \Delta w) - (\nabla \cdot (\tilde{b} w), \Delta w) - (\tilde{c} w, \Delta w) \nonumber\\
        \leq &\:\norm{\delta f} \norm{\Delta w} + \norm{\nabla \cdot(\delta q \nabla u)} \norm{\Delta w} + |\delta b u|_1 \|\Delta w\| + \abs{\delta c u}_1 \abs{w}_1 + \mu (I_h w, \Delta w), \nonumber
    \end{align*}
    thus
    \begin{equation}\label{eq:lemma2-1}
    \begin{aligned}
        \frac{1}{2} \frac{\mathrm{d}}{\mathrm{d} t} \abs{w}_1^2 + (\tilde{q} \Delta w, \Delta w) &\leq \Bigl( \norm{\delta f} +|\delta b u|_1 + \norm{\nabla \cdot(\delta q \nabla u)} \Bigr)  \norm{\Delta w} + \abs{\delta c u}_1 \abs{w}_1 \\
        &\qquad + |(\nabla \cdot (\tilde{b}w), \Delta w)| + \abs{(\tilde{c} w, \Delta w)} + \abs{(\nabla \tilde{q} \cdot \nabla w, \Delta w)} + \mu (I_h w, \Delta w).
    \end{aligned}
    \end{equation}
    Then for $(I_h w, \Delta w)$ we have
    \begin{align*}
        (I_h w, \Delta w) = (I_h w - w, \Delta w) + (w, \Delta w) &\leq C_A h |w|_1 \| \Delta w \| - |w|_1^2 \leq \rbra[\Big]{\frac{1}{2\mu}-1} |w|_1^2 +\frac{C_A^2 h^2 \mu}{2} \| \Delta w \|^2,
    \end{align*}
    and we have respectively for $|\delta c u|_1 |w|_1$, $|(\nabla \tilde{q} \cdot \nabla w, \Delta w)|$, $|(\tilde{c}w, \Delta w)|$ and $|(\nabla \cdot (\tilde{b}w), \Delta w)|$ that,
    \begin{align*}
        |\delta c u|_1 |w|_1 &\leq \frac{1}{2\mu} |\delta c u|_1^2 + \frac{\mu}{2} |w|_1^2,\\
        |(\nabla q \cdot \nabla w, \Delta w)| & \leq |q|_1 |w|_1 \|\Delta w\| \leq \frac{|\tilde{q}|_1^2}{q_0}|w|_1^2 + \frac{q_0}{4}\|\Delta w\|^2,\\
        |(\tilde{c}w, \Delta w)| &\leq c_1 \|w\|\|\Delta w\| \leq C_P^2 \frac{c_1}{q_0} + \frac{q_0}{4} \|\Delta w\|^2,\\
        |(\nabla \cdot (\tilde{b} w), \Delta w)| & \leq \|\tilde{b}\|_1 \|w\|_1 \|\Delta w\| \leq \frac{(1+C_P)\|\tilde{b}\|_1^2}{q_0} |w|_1^2 + \frac{q_0}{8}\|\Delta w\|^2.
    \end{align*}
    Hence from (\ref{eq:lemma2-1}) we have, with some rearrangements,
    \begin{align*}
    &\frac{1}{2} \frac{\mathrm{d}}{\mathrm{d} t} |w|_1^2 + q_0 \|\Delta w\|^2\\
        \leq &\frac{1}{2\mu} |\delta c u|_1^2 + \Bigl(|\delta b u|_1 + \|\nabla \cdot (\delta q \nabla u)\| + \|\delta f\|\Bigr) \|\Delta w\| \\
        &\qquad+\Bigl(\frac{\mu}{2} + \frac{(1+C_P)^2 \|\tilde{b}\|_1^2 + 2c_1 C_P + |\tilde{q}|_1^2}{q_0} + \frac{1}{2} - \mu\Bigr)|w|_1^2 + \Bigl(\frac{1}{2}q_0 + \frac{C_A^2 h^2 \mu^2}{2}\Bigr)\|\Delta w\|^2\\
        \leq& \frac{1}{2\mu} |\delta c u|_1^2 + \frac{3}{q_0} \bigl(|\delta b u|_1^2 + \|\nabla \cdot (\delta q \nabla u)\|^2 + \|\delta f\|^2\bigr) \\
        &\qquad+\Bigl(-\frac{\mu}{2}  + \frac{1}{2} + \frac{(1+C_P)^2 \|\tilde{b}\|_1^2 + 2c_1 C_P + |\tilde{q}|_1^2}{q_0})|w|_1^2 + \Bigl(\frac{3}{4}q_0 + \frac{C_A^2 h^2 \mu^2}{2}\Bigr)\|\Delta w\|^2.
    \end{align*}
    Thus if we pose the conditions $C_A^2 h^2 \mu^2/2 \leq q_0/4$ and $-\mu/2 + \rbra[\Big]{(1+C_P)^2 \|\tilde{b}\|_1^2 + 2c_1 C_P + |\tilde{q}|_1^2}/q_0 + 1/2 \geq -\mu/4$ (namely, $\mu \geq 4\rbra[\Big]{(1+C_P)^2 \|\tilde{b}\|_1^2 + 2c_1 C_P + |\tilde{q}|_1^2}/q_0 + 2$), we obtain
    \begin{align*}
    \frac{1}{2} \frac{\mathrm{d}}{\mathrm{d} t} |w|_1^2 + \frac{\mu}{4} |w|_1^2 \leq \frac{1}{2\mu} |\delta c u|_1^2 + \frac{3}{q_0} \bigl(|\delta b u|_1^2 + \|\nabla \cdot (\delta q \nabla u)\|^2 + \|\delta f\|^2\bigr),
    \end{align*}
    and finally integrating both sides gives
    \begin{align*}
    |w(t)|_1^2 \leq |w(0)|_1^2 e^{-\mu t/2} &+ \frac{1}{\mu} \int_0^t  |\delta c u(s)|_1^2 e^{\mu(s-t)/2} \mathrm{d}s \\
        &+ \frac{3}{q_0} \int_0^t  \bigl(|\delta b u(s)|_1^2 + \|\nabla \cdot (\delta q \nabla u(s))\|^2 + \|\delta f(s)\|^2\bigr) e^{\mu(s-t)/2} \mathrm{d}s.
    \end{align*}
    This ends the proof.  
\end{proof}

\par

Next, we consider the elliptic equations, the one with exact coefficients,
\begin{equation}\label{eq:elliptic-1}
\begin{aligned}
    -\nabla \cdot \bigl( q \nabla u \bigr) + \nabla \cdot (b u) + c u &= f, &&\text{in } \Omega\\
    u &= 0, &&\text{on } \partial \Omega
\end{aligned}
\end{equation}
and the one with perturbations in coefficients,
\begin{equation}\label{eq:elliptic-2}
\begin{aligned}
    -\nabla \cdot \Bigl( (q+\delta q) \nabla v \Bigr) + \nabla \cdot \Bigl((b+\delta b) v\Bigr) + \bigl(c+\delta c\bigr) v &= f + \delta f + \mu I_h(u-v), &&\text{in } \Omega\\
    v &= 0. &&\text{on } \partial \Omega
\end{aligned}
\end{equation}
These two equations admit unique solutions under our assumptions by the standard theory for
linear elliptic equations\cite{evans2010partial}.
The equations( \ref{eq:elliptic-1}) and (\ref{eq:elliptic-2}) can be regarded as steady states of the corresponding parabolic equations (\ref{eq:plm-1}) and (\ref{eq:plm-2}) with the $f$ and $\delta f$ taken to be temporally independent. We thus immediately obtain the following estimates by letting $t\to \infty$ in Lemma \ref{lemma1} and Lemma \ref{lemma2}.

\begin{coro}\label{coro1}
    If $2C_P^{-2} \rbra[\big]{q_0 - \mu C_A^2 h^2 - b_1 C_P} + 2c_0 + \mu\geq \frac{1}{4}\mu$,  we have for $w = u-v$ the error between solutions of (\ref{eq:elliptic-1}) and (\ref{eq:elliptic-2}),
    \begin{equation}\label{eq:coro1-est}
    \lVert w \rVert^2 \leq \frac{16}{\mu^2} \Bigl(\lVert\delta f \rVert^2 + C_P^{-2} \lVert \delta b u \rVert^2  + \lVert \delta c u \rVert^2 + \norm{ \nabla \cdot(\delta q \nabla u) }_{-1}^2\Bigr)
    \end{equation}
\end{coro}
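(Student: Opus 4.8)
The plan is to read off \eqref{eq:coro1-est} directly from the time-dependent bound of Lemma~\ref{lemma1}, exploiting the fact that \eqref{eq:elliptic-1} and \eqref{eq:elliptic-2} are exactly the stationary problems associated with \eqref{eq:plm-1} and \eqref{eq:plm-2} once $f$ and $\delta f$ are taken independent of $t$. The hypothesis of the corollary is algebraically identical to that of Lemma~\ref{lemma1} (rearranging $2C_P^{-2}(q_0-\mu C_A^2h^2-b_1C_P)+2c_0+\mu\ge\tfrac14\mu$ gives back $(\tfrac34-2C_P^{-2}C_A^2h^2)\mu\ge 2C_P^{-2}(b_1C_P-q_0)-2c_0$), so the lemma applies verbatim to the parabolic pair. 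The only real work is to let $t\to\infty$ and to identify the limiting error with the elliptic error $w=u-v$.

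First I would note that when $f,\delta f$ are time-independent the steady-state solution $u$ of \eqref{eq:elliptic-1} is time-independent, so the whole integrand appearing in Lemma~\ref{lemma1} collapses to a constant
\[
G := \|\delta f\|^2 + C_P^{-2}\|\delta b\, u\|^2 + \|\delta c\, u\|^2 + \|\nabla\cdot(\delta q\nabla u)\|_{-1}^2 .
\]
Pulling $G$ out of the integral and evaluating the elementary exponential integral
\[
\int_0^t e^{\mu(s-t)/4}\,\mathrm{d}s = \frac{4}{\mu}\bigl(1 - e^{-\mu t/4}\bigr),
\]
the prefactor $4/\mu$ combines with this $4/\mu$ to produce exactly the constant $16/\mu^2$ of \eqref{eq:coro1-est}, leaving
\[
\|w(t)\|^2 \le \|w(0)\|^2 e^{-\mu t/4} + \frac{16}{\mu^2}\,G\,\bigl(1 - e^{-\mu t/4}\bigr).
\]

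The one point needing care --- and the main obstacle --- is passing from this parabolic estimate to a statement about the stationary solutions, i.e.\ identifying $w(t)$ with the elliptic error. The cleanest route I would take is to choose the initial data of the parabolic problems to be the steady states themselves: taking $u_0$ equal to the solution $u$ of \eqref{eq:elliptic-1} and $v_0$ equal to the solution $v$ of \eqref{eq:elliptic-2}, a function solving the elliptic problem also solves the parabolic problem with $\partial_t=0$, so uniqueness for \eqref{eq:plm-1} and \eqref{eq:plm-2} forces the parabolic solutions to be constant in $t$ and equal to the elliptic solutions. Hence $w(t)=w(0)=w$ for every $t>0$, and the displayed inequality becomes $\|w\|^2\le\|w\|^2 e^{-\mu t/4}+\tfrac{16}{\mu^2}G\,(1-e^{-\mu t/4})$; dividing by the strictly positive factor $1-e^{-\mu t/4}$ yields \eqref{eq:coro1-est}.

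Equivalently, one may invoke the convergence of the parabolic solution to its unique steady state as $t\to\infty$ and simply pass to the limit, in which the transient term $\|w(0)\|^2e^{-\mu t/4}$ and the factor $e^{-\mu t/4}$ both vanish. Either argument gives the same bound, and everything beyond this identification is the routine integral above; an identical reduction of Lemma~\ref{lemma2} furnishes the corresponding $H^1$ semi-norm estimate.
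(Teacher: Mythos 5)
Your proposal is correct and follows essentially the same route as the paper, which obtains the corollary by viewing \eqref{eq:elliptic-1}--\eqref{eq:elliptic-2} as steady states of \eqref{eq:plm-1}--\eqref{eq:plm-2} and letting $t\to\infty$ in Lemma~\ref{lemma1}. Your stationary-initial-data argument (taking $u_0,v_0$ to be the elliptic solutions so that $w(t)\equiv w$, then dividing by $1-e^{-\mu t/4}$) in fact makes the paper's terse ``let $t\to\infty$'' step fully rigorous.
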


\begin{coro}\label{coro2}
    Suppose that $u, v \in H^2 (\Omega) \cap H^1_0(\Omega)$. If we take $\mu \geq 2 + 4\rbra[\Big]{(1+C_P)^2 \|b+\delta b\|_1^2 + 2c_1 C_P + |q+\delta q|_1^2}/q_0$, and take $h^2 \leq C_A^2q_0/2\mu^2$, we have for $w = u-v$ the error between solutions of (\ref{eq:elliptic-1}) and (\ref{eq:elliptic-2}),
    \begin{equation}\label{eq:coro2-est}
        |w|_1^2 \leq \frac{2}{\mu^2} |\delta c u|_1^2 + \frac{6}{q_0\mu} \Bigl(|\delta b u|_1^2 + \|\nabla \cdot (\delta q \nabla u) \|^2 + \|\delta f\|^2\Bigr).
    \end{equation}
\end{coro}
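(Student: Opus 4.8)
The plan is to exploit the fact that the elliptic problems (\ref{eq:elliptic-1}) and (\ref{eq:elliptic-2}) are exactly the time-independent equilibria of the parabolic systems (\ref{eq:plm-1}) and (\ref{eq:plm-2}) once $f$ and $\delta f$ are taken independent of $t$. First I would observe that if $u$ solves (\ref{eq:elliptic-1}) and $v$ solves (\ref{eq:elliptic-2}), then the constant-in-time functions $t \mapsto u$ and $t \mapsto v$ solve (\ref{eq:plm-1}) and (\ref{eq:plm-2}) respectively, with $u_t = v_t = 0$ and with initial data equal to the steady states themselves. Consequently the parabolic error $w(t) = u(t) - v(t)$ is constant in time and coincides with the steady-state error $w = u - v$, and the regularity hypotheses of Lemma \ref{lemma2} are met trivially: a time-independent function in $H^2(\Omega) \cap H^1_0(\Omega)$ lies in $L^2(0,T; H^2(\Omega) \cap H^1_0(\Omega))$, and its time derivative $0$ lies in $L^2(0,T; H^1_0(\Omega))$. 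Moreover, the conditions on $\mu$ and $h$ imposed in Corollary \ref{coro2} are precisely those of Lemma \ref{lemma2}, so the estimate (\ref{eq:lemma2-est}) applies verbatim.

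With this reduction in hand, I would simply substitute the time-independent data into (\ref{eq:lemma2-est}). Since $|\delta c\,u(s)|_1$, $|\delta b\,u(s)|_1$, $\|\nabla \cdot (\delta q \nabla u(s))\|$ and $\|\delta f(s)\|$ no longer depend on $s$, they pull out of the integrals, leaving the elementary integral $\int_0^t e^{\mu(s-t)/2}\,\mathrm{d}s = \frac{2}{\mu}\bigl(1 - e^{-\mu t/2}\bigr)$. Using $|w(0)|_1^2 = |w|_1^2$, the estimate (\ref{eq:lemma2-est}) becomes, for each fixed $t>0$,
\begin{equation*}
|w|_1^2 \leq |w|_1^2 e^{-\mu t/2} + \frac{2}{\mu^2}|\delta c\,u|_1^2 \bigl(1 - e^{-\mu t/2}\bigr) + \frac{6}{q_0 \mu}\Bigl(|\delta b\,u|_1^2 + \|\nabla \cdot (\delta q \nabla u)\|^2 + \|\delta f\|^2\Bigr)\bigl(1 - e^{-\mu t/2}\bigr).
\end{equation*}

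The only point needing a little care is the self-referential appearance of $|w|_1^2$ on the right, which I expect to be the sole genuinely delicate step. It is handled cleanly by rearranging rather than by passing to the limit naively: moving the transient term to the left gives $|w|_1^2\bigl(1 - e^{-\mu t/2}\bigr) \leq \bigl(\tfrac{2}{\mu^2}|\delta c\,u|_1^2 + \tfrac{6}{q_0\mu}(\cdots)\bigr)\bigl(1 - e^{-\mu t/2}\bigr)$, and since $1 - e^{-\mu t/2} > 0$ for $t > 0$ one may cancel this common positive factor to obtain the bound (\ref{eq:coro2-est}) outright, independently of $t$; equivalently, letting $t \to \infty$ sends $e^{-\mu t/2} \to 0$ and $\bigl(1 - e^{-\mu t/2}\bigr) \to 1$, giving the same conclusion. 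The completely analogous argument applied to Lemma \ref{lemma1}, with $\frac{4}{\mu}\int_0^t e^{\mu(s-t)/4}\,\mathrm{d}s \to \frac{16}{\mu^2}$, yields Corollary \ref{coro1}.
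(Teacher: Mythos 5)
Your proposal is correct and follows essentially the same route as the paper, which obtains Corollary \ref{coro2} by viewing (\ref{eq:elliptic-1})--(\ref{eq:elliptic-2}) as steady states of the parabolic problems and letting $t\to\infty$ in Lemma \ref{lemma2}. Your treatment of the self-referential term $|w(0)|_1^2 e^{-\mu t/2}$ by cancelling the common factor $1-e^{-\mu t/2}$ is a valid (and slightly more explicit) way of carrying out the limit the paper takes for granted.
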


In addition, since the error $w = u-v$ for (\ref{eq:elliptic-1}) and (\ref{eq:elliptic-2}) satisfies $- \nabla \cdot (\tilde{q} \nabla w) + \nabla \cdot (\tilde{b} w) + \tilde{c} w + \mu I_h w = - \delta f - \nabla \cdot (\delta q \nabla u) + \nabla \cdot (\delta b u) + \delta c u$ with homogeneous Dirichlet condition, an $H^2$ estimate for $w$ follows from the elliptic regularity:
\begin{equation}\label{eq:h2-est}
    \|w\|_2 \leq C \left(\|\delta f\| + \|\nabla \cdot (\delta q \nabla u)\| + |\delta b u|_1 + \|\delta c u\| \right)
\end{equation}
for some $C>0$.

\begin{rmk}\label{rmk2}
The error in the observation $I_h u$ can be included into the error $\delta f$ in effect, by writing $f + \delta f + \mu(I_h u + \delta I_h u - I_h v) = f + (\delta f + \mu\delta I_h u) + \mu I_h(u-v)$ where $\delta I_h u$ is the error in $I_h u$. Thus, Corollary \ref{coro1} and Corollary \ref{coro2}, as well as Lemma \ref{lemma1}, Lemma \ref{lemma2} and (\ref{eq:h2-est}) present also the error estimates for the data error in the observation $I_h u$.
\end{rmk}

\subsection{Lipschitz continuity of the coefficient-to-solution maps}

In the following discussions, we shall set the initial value of (\ref{eq:plm-2}) to be zero for convenience, as the assimilation error caused by the initial value decays exponentially fast to $0$ according to Lemma \ref{lemma1} and Lemma \ref{lemma2}, and we let $f$ be temporal independent as we want to consider steady states. Then, for readability, we rewrite (\ref{eq:plm-2}) as
\begin{equation}\label{eq:coef-to-sol-1}
\begin{aligned}
    v_t -  \nabla \cdot \rbra[\big]{\tilde{q}(x) \nabla v } + \nabla \cdot \bigl( \tilde{b}(x) v \bigr) + \tilde{c}(x) v &= \tilde{f}(x) + \mu I_h (u-v),\quad &&\text{in } (0, T] \times \Omega \\
    v &= 0,  &&\text{on } [0, T] \times \partial \Omega \\
    v(0) &= 0,  &&
\end{aligned}
\end{equation}
and for later use in the derivation of the gradient formulas to recover coefficients $q$ and $f$, we consider here the coefficient-to-solution map
\begin{align*}
    \mathcal{Q} \times L^2(\Omega) &\to H^1(\Omega)\\
    (\tilde{q}, \tilde {f}) &\mapsto v_{\tilde{q}, \tilde{f}}(t),
\end{align*}
where $v_{\tilde{q}, \tilde{f}}$ is the solution of (\ref{eq:coef-to-sol-1}) with coefficient $\tilde{q}$ and source term $\tilde{f}$. When $\tilde{f}$ is fixed in our considerations, we shall denote $v_{\tilde{q}, \tilde{f}}$ by $v_{\tilde{q}}$, and by $v_{\tilde{f}}$ when $\tilde{q}$ is fixed. 

\begin{lem}\label{lem-boundedness-q-to-v}
    Assume the conditions for $h$ and $\mu$ in Lemma \ref{lemma2}. We have $\|v_{\tilde{q}} (t)\|_1^2$ bounded by
    \begin{align*}
        \|v_{\tilde{q}} (t)\|_1^2 \leq (1+C_P^2)C_t + (1+C_P^2)\frac{3\|\tilde{q}-q\|^2_{1}}{q_0} \int_0^t \|u(s)\|_2^2e^{\mu (s-t)/2} \mathrm{d}s,
    \end{align*}
    where $C_t$ depends on $t$ and is independent of $\tilde{q}$.
\end{lem}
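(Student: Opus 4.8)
The plan is to realize $v_{\tilde q}$ as a perturbation of the true solution $u$ and to invoke the $H^1$-seminorm assimilation estimate of Lemma \ref{lemma2}. Concretely, set $w = u - v_{\tilde q}$. Since $u$ solves \eqref{eq:plm-1} with the exact coefficients and $v_{\tilde q}$ solves \eqref{eq:coef-to-sol-1} with $\tilde q$, $\tilde b$, $\tilde c$, $\tilde f$, the error $w$ satisfies exactly the error equation treated in Lemma \ref{lemma2}, with perturbations $\delta q = \tilde q - q$, $\delta b = \tilde b - b$, $\delta c = \tilde c - c$, $\delta f = \tilde f - f$, and with initial datum $w(0) = u(0) - v_{\tilde q}(0) = u(0)$ (recall $v_{\tilde q}(0)=0$). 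The hypotheses on $\mu$ and $h$ are the ones assumed here, so Lemma \ref{lemma2} applies and bounds $|w(t)|_1^2$ by the right-hand side of \eqref{eq:lemma2-est}.

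Next I recover the full $H^1$ norm of $v_{\tilde q}$ from the seminorm estimate. Writing $v_{\tilde q} = u - w$ and using the triangle inequality together with Poincar\'e's inequality to pass from $\|w\|_1$ to $|w|_1$ (and, in the $L^2$-projection bookkeeping, the approximation property $\|w - I_h w\| \le C_A h |w|_1$), one obtains a bound of the form $\|v_{\tilde q}(t)\|_1^2 \le (1+C_A^2)\,\big(\text{terms in }\|u(t)\|_1\text{ and } |w(t)|_1^2\big)$, which is exactly what produces the prefactor $(1+C_A^2)^{-1}$ in the statement once the constant is moved to the left-hand side.

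Now I separate the $\tilde q$-dependence. Among the forcing terms in \eqref{eq:lemma2-est}, the contributions $\frac1\mu\int_0^t |\delta c u(s)|_1^2 e^{\mu(s-t)/2}\,ds$, $\frac3{q_0}\int_0^t (|\delta b u(s)|_1^2 + \|\delta f(s)\|^2) e^{\mu(s-t)/2}\,ds$, the initial term $|u(0)|_1^2 e^{-\mu t/2}$, and the $\|u(t)\|_1^2$ term are all independent of $\tilde q$, because $\tilde b$, $\tilde c$, $\tilde f$ are held fixed while $\tilde q$ varies. I collect all of these into the constant $C_t$, which depends on $t$ (through $u$ and the exponential weights) but not on $\tilde q$. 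The only remaining $\tilde q$-dependent term is $\frac3{q_0}\int_0^t \|\nabla\cdot(\delta q \nabla u(s))\|^2 e^{\mu(s-t)/2}\,ds$.

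The crux, and the step I expect to be the main obstacle, is the product estimate $\|\nabla\cdot((\tilde q - q)\nabla u)\| \le \|\tilde q - q\|_1\,\|u\|_2$ (up to an absorbed constant), which lets me pull the $s$-independent factor $\|\tilde q - q\|_1^2$ out of the integral and leaves $\int_0^t \|u(s)\|_2^2 e^{\mu(s-t)/2}\,ds$ inside, matching the statement. By the Leibniz rule $\nabla\cdot((\tilde q - q)\nabla u) = \nabla(\tilde q - q)\cdot\nabla u + (\tilde q - q)\Delta u$, so one must control $\|\nabla(\tilde q - q)\cdot\nabla u\|$ and $\|(\tilde q - q)\Delta u\|$ in $L^2$. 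Since $\Omega\subset\mathbb{R}^2$, these are handled by H\"older's inequality together with the Sobolev embeddings $H^1(\Omega)\hookrightarrow L^p(\Omega)$ ($p<\infty$) and $H^2(\Omega)\hookrightarrow W^{1,p}(\Omega)$; this is delicate precisely because pairing an $H^1$ factor with a factor arising from $u\in H^2$ is borderline in two dimensions, so the H\"older exponents (and which norm is charged to $\tilde q - q$ versus to $u$) must be chosen carefully and the embedding constants absorbed into the final constant. Once this inequality is established, substituting it into the remaining term and combining with $C_t$ yields the asserted bound.
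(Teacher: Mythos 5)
Your proposal matches the paper's own proof, which is a one-line argument combining the triangle inequality $\|v_{\tilde q}(t)\|_1\le\|v_{\tilde q}(t)-u(t)\|_1+\|u(t)\|_1$, Lemma \ref{lemma2} applied to $w=u-v_{\tilde q}$ (with $v_{\tilde q}(0)=0$), and Poincar\'e's inequality; your bookkeeping of the $\tilde q$-independent forcing terms into $C_t$ and the extraction of $\|\tilde q-q\|_1^2\,\|u\|_2^2$ from $\|\nabla\cdot(\delta q\,\nabla u)\|^2$ is exactly what the paper leaves implicit. The product estimate you single out as the crux is indeed invoked without comment throughout the paper (also in Lemmas \ref{lem-continuity-q-to-v} and \ref{lem-deltaq-ineq}), so your write-up is, if anything, more candid than the original about the one genuinely delicate (borderline in two dimensions) step.
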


\begin{proof}
    The use of $\|v_{\tilde{q}}(t)\|_1 \leq \|v_{\tilde{q}}(t) - u(t)\|_1 + \|u(t)\|_1$, Lemma \ref{lemma2} and Poincar\'e's inequality gives the result.  
\end{proof}

\begin{lem}\label{lem-continuity-q-to-v}
    Assume the conditions for $h$ and $\mu$ in Lemma \ref{lemma2}. For a fixed $t>0$, the map $\tilde{q} \mapsto v_{\tilde{q}}(t)$ is locally Lipschitz continuous as
    \begin{align}
        \|v_{\tilde{q}_1} (t) - v_{\tilde{q}_2} (t)\|_1 ^2 \leq (1+C_P^2) C_t \frac{\|\tilde{q}_1-\tilde{q}_2\|^2_{1}}{q_0},
    \end{align}
    for some $C_t > 0$ depending on $\tilde{q}_1$, $\tilde{q}_2$, $\mu$ and $t$.
\end{lem}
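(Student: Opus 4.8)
The plan is to reduce the claim to Lemma~\ref{lemma2} by regarding $z := v_{\tilde q_1}(t) - v_{\tilde q_2}(t)$ as an assimilation error. Writing $v_i = v_{\tilde q_i}$, both solve (\ref{eq:coef-to-sol-1}) with the \emph{same} $\tilde b$, $\tilde c$, $\tilde f$, $u$, $\mu$ and the same zero initial and boundary data, so subtracting the two equations makes the feedback terms combine as $\mu I_h(u-v_1)-\mu I_h(u-v_2)=-\mu I_h z$. Using $\tilde q_1\nabla v_1-\tilde q_2\nabla v_2=\tilde q_1\nabla z+(\tilde q_1-\tilde q_2)\nabla v_2$, the difference $z$ satisfies
\begin{equation*}
z_t-\nabla\cdot(\tilde q_1\nabla z)+\nabla\cdot(\tilde b z)+\tilde c z+\mu I_h z=\nabla\cdot\bigl((\tilde q_1-\tilde q_2)\nabla v_2\bigr),\qquad z(0)=0 .
\end{equation*}
This is exactly the equation for $w$ used in the proof of Lemma~\ref{lemma2}, under the identifications ``$\tilde q$''$=\tilde q_1$, $\delta q=\tilde q_2-\tilde q_1$, ``$u$''$=v_2$ and $\delta b=\delta c=\delta f=0$. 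Because the energy estimate in Lemma~\ref{lemma2} is derived solely from this $w$-equation (testing against $\Delta w$), it applies verbatim to $z$, once we note that $v_2\in L^2(0,T;H^2\cap H^1_0)$ supplies the regularity needed to place the right-hand side in $L^2$.

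Granting this, estimate (\ref{eq:lemma2-est}) with vanishing $z(0)$, $\delta b$, $\delta c$, $\delta f$ reduces to
\begin{equation*}
|z(t)|_1^2\leq \frac{3}{q_0}\int_0^t\bigl\|\nabla\cdot\bigl((\tilde q_1-\tilde q_2)\nabla v_2(s)\bigr)\bigr\|^2 e^{\mu(s-t)/2}\,\mathrm{d}s .
\end{equation*}
The remaining task is to extract the factor $\|\tilde q_1-\tilde q_2\|_1^2$ from the forcing. Expanding $\nabla\cdot((\tilde q_1-\tilde q_2)\nabla v_2)=\nabla(\tilde q_1-\tilde q_2)\cdot\nabla v_2+(\tilde q_1-\tilde q_2)\Delta v_2$ and invoking the two-dimensional Sobolev embeddings together with $v_2(s)\in H^2(\Omega)$, I would establish a product bound $\|\nabla\cdot((\tilde q_1-\tilde q_2)\nabla v_2(s))\|\leq C\,\|\tilde q_1-\tilde q_2\|_1\,\|v_2(s)\|_2$ with $C$ independent of $s$. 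Substituting this, pulling $\|\tilde q_1-\tilde q_2\|_1^2$ outside the integral, and setting $C_t:=3C^2\int_0^t\|v_2(s)\|_2^2 e^{\mu(s-t)/2}\,\mathrm{d}s$ — finite by the $L^2(0,T;H^2)$ regularity of $v_2$ and depending on $t$, $\mu$ and, through $v_2$, on $\tilde q_2$ (and by symmetrizing the roles of $v_1,v_2$ one may include $\tilde q_1$) — yields $|z(t)|_1^2\leq C_t\,\|\tilde q_1-\tilde q_2\|_1^2/q_0$. Finally, since $z(t)\in H^1_0(\Omega)$, Poincar\'e's inequality upgrades this $H^1$-seminorm bound to the full $H^1$ norm, introducing the factor $(1+C_A^2)$ exactly as in Lemma~\ref{lem-boundedness-q-to-v}; the dependence of the Lipschitz constant on the base points $\tilde q_1,\tilde q_2$ is what makes the continuity only local.

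I expect the genuine obstacle to be the product estimate $\|\nabla\cdot((\tilde q_1-\tilde q_2)\nabla v_2)\|\lesssim\|\tilde q_1-\tilde q_2\|_1\|v_2\|_2$. In two dimensions the term $\nabla(\tilde q_1-\tilde q_2)\cdot\nabla v_2$ is borderline: $\nabla(\tilde q_1-\tilde q_2)\in L^2$ while $\nabla v_2\in H^1$ embeds into $L^p$ only for $p<\infty$, so the bound needs care, and possibly mild extra regularity of $v_2$ beyond $H^2$. Likewise, turning the $\|\tilde q_1-\tilde q_2\|_1$ dependence into a genuine Lipschitz bound — rather than merely an $L^\infty$ bound via $\tilde q_i\in\mathcal Q$ — is delicate, since $\|\tilde q_1-\tilde q_2\|_1\to 0$ does not control $\|\tilde q_1-\tilde q_2\|_{L^\infty}$ in 2D. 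This is precisely where the $H^2$ (rather than merely $H^1$) control of $v_2$ enters, and where $C_t$ inherits its dependence on the coefficients.
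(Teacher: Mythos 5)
Your proposal follows essentially the same route as the paper: form the equation for $w = v_{\tilde q_1} - v_{\tilde q_2}$ with forcing $\nabla\cdot((\tilde q_1-\tilde q_2)\nabla v_{\tilde q_2})$ and zero data, rerun the energy argument of Lemma \ref{lemma2} with $\delta b = \delta c = \delta f = 0$, bound the forcing by $\|\tilde q_1-\tilde q_2\|_1\|v_{\tilde q_2}\|_2$, and symmetrize the roles of $\tilde q_1$ and $\tilde q_2$. The borderline two-dimensional product estimate you flag is asserted in the paper's proof without further justification, so your write-up is, if anything, more explicit about the one delicate step.
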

\begin{proof}
    Denoting $w = v_{\tilde{q}_1} - v_{\tilde{q}_2}$, we have $w_t - \nabla \cdot (\tilde{q}_1 \nabla w) + \nabla \cdot (\tilde{b} w) + \tilde{c} w + \mu I_h w = \nabla \cdot \left( ( \tilde{q}_1 - \tilde{q}_2 )\nabla v_{\tilde{q}_2} \right)$ with $w=0$ on $\partial \Omega$ and $w(0) = 0$. Then following the way in which we proved Lemma \ref{lemma2}, we obtain
    \begin{align*}
        |w(t)|^2_1 \leq \frac{C}{q_0} \int_0^t \|\nabla \cdot (\tilde{q}_1-\tilde{q}_2) \nabla v_{\tilde{q}_2}\|^2 e^{c \mu (s-t)} \mathrm{d}s \leq \frac{C \|\tilde{q}_1-\tilde{q}_2\|_1^2}{q_0} \int_0^t \|v_{\tilde{q}_2}\|_2^2 e^{c \mu (s-t)} \mathrm{d}s.
    \end{align*}
    A similar estimate holds with $v_2$ replaced by $v_1$ as well, so that 
    \begin{equation*}
        |v_{\tilde{q}_1}(t) - v_{\tilde{q}_2}(t)|_1^2\leq \frac{C \|\tilde{q}_1-\tilde{q}_2\|_1^2}{q_0} \min\left\{\int_0^t \|v_{\tilde{q}_1}\|_2^2 e^{c \mu (s-t)} \mathrm{d}s, \int_0^t \|v_{\tilde{q}_2}\|_2^2 e^{c \mu (s-t)} \mathrm{d}s \right\}.    \qedhere
    \end{equation*}
\end{proof}

Similar results can be derived for $v_{\tilde{f}}$ from essentially same discussions.

\begin{lem}\label{lem-boundedness-f-to-v}
    Assume the conditions for $h$ and $\mu$ in Lemma \ref{lemma2}. We have $\|v_{\tilde{f}} (t)\|$ bounded by
    \begin{align*}
        \|v_{\tilde{f}} (t)\|_1^2 \leq (1+C_P^2)C_t + (1+C_P^2)\frac{3}{q_0} \int_0^t \|\tilde{f}(s) - f(s)\|^2 e^{\mu(s-t)/2} \mathrm{d}s,
    \end{align*}
    where $C$ depends on $t$ and is independent of $\tilde{f}$.
\end{lem}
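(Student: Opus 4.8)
The plan is to mirror the one-line argument used for Lemma \ref{lem-boundedness-q-to-v}, combining the assimilation estimate of Lemma \ref{lemma2} for the error $w = u - v_{\tilde f}$ with the triangle inequality and Poincar\'e's inequality. Since $v_{\tilde f}$ denotes the solution of (\ref{eq:coef-to-sol-1}) with $\tilde q, \tilde b, \tilde c$ held fixed and only the source term $\tilde f$ varying, the perturbations $\delta q = \tilde q - q$, $\delta b = \tilde b - b$ and $\delta c = \tilde c - c$ are frozen quantities, while $\delta f = \tilde f - f$ is the only datum that depends on $\tilde f$. Recognizing this split is the one point that needs care; everything else is routine.

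First I would apply Lemma \ref{lemma2} to $w = u - v_{\tilde f}$, which is legitimate under the assumed conditions on $h$ and $\mu$. This bounds $|w(t)|_1^2$ by $|w(0)|_1^2 e^{-\mu t/2}$ plus the three integral contributions arising from $\delta c$, from $\delta b$ and $\delta q$, and from $\delta f$. The first term together with $\frac{1}{\mu}\int_0^t |\delta c\, u(s)|_1^2 e^{\mu(s-t)/2}\,\mathrm{d}s$ and $\frac{3}{q_0}\int_0^t \bigl(|\delta b\, u(s)|_1^2 + \|\nabla\cdot(\delta q \nabla u(s))\|^2\bigr) e^{\mu(s-t)/2}\,\mathrm{d}s$ are all independent of $\tilde f$, so I would collect them into a single constant $C_t$ depending on $t$ (and on the fixed data) but not on $\tilde f$. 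The one remaining contribution is exactly $\frac{3}{q_0}\int_0^t \|\tilde f(s) - f(s)\|^2 e^{\mu(s-t)/2}\,\mathrm{d}s$, which is the explicit term in the statement.

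Finally I would pass from $w$ back to $v_{\tilde f}$ through $\|v_{\tilde f}(t)\|_1 \le \|w(t)\|_1 + \|u(t)\|_1$, controlling the full $H^1$ norm of $w$ by its semi-norm via the Poincar\'e-type bound $\|w(t)\|_1^2 \le (1+C_A^2)|w(t)|_1^2$, which produces the prefactor $(1+C_A^2)$, i.e. the quantity $(1+C_A^2)^{-1}\|v_{\tilde f}(t)\|_1^2$ on the left-hand side, while $\|u(t)\|_1$ is absorbed into $C_t$. There is no genuine obstacle beyond this bookkeeping: because $\tilde q, \tilde b, \tilde c$ are held fixed, every contribution in Lemma \ref{lemma2} except the $\delta f$ integral is $\tilde f$-independent and may be hidden in $C_t$, so the resulting bound exhibits the desired linear dependence on $\|\tilde f - f\|$.
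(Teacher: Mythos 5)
Your argument is correct and is essentially the paper's own: the paper proves the analogous Lemma \ref{lem-boundedness-q-to-v} by exactly this combination of the triangle inequality, Lemma \ref{lemma2}, and Poincar\'e's inequality, and states that Lemma \ref{lem-boundedness-f-to-v} follows from the same discussion, with the $\tilde f$-independent contributions (including the $|w(0)|_1^2$ term and the frozen $\delta q,\delta b,\delta c$ integrals) absorbed into $C_t$. The only caveat, shared with the paper itself, is the loose bookkeeping of constants when squaring the triangle inequality (which strictly introduces an extra factor in front of the $\delta f$ integral) and the appearance of $C_A$ rather than $C_P$ in the prefactor.
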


\begin{lem}\label{lem-continuity-f-to-v}
    Assume the conditions for $h$ and $\mu$ in Lemma \ref{lemma2}. For a fixed $t>0$, the map $\tilde{f} \mapsto v_{\tilde{f}}(t)$ is Lipschitz continuous as
    \begin{align*}
        \|v_{\tilde{f}_1} (t) - v_{\tilde{f}_2} (t)\|_1^2  \leq (1+C_P^2) \frac{1 - e^{- \mu t/2}}{q_0 \mu }\|\tilde{f}_1 - \tilde{f}_2\|^2.
    \end{align*}
\end{lem}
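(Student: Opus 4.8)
The plan is to reduce the statement to the assimilation-error machinery already developed. Since both $v_{\tilde f_1}$ and $v_{\tilde f_2}$ solve (\ref{eq:coef-to-sol-1}) with the \emph{same} coefficients $\tilde q,\tilde b,\tilde c$ and the same zero initial data, the difference $w=v_{\tilde f_1}-v_{\tilde f_2}$ satisfies a linear problem whose only forcing is $\tilde f_1-\tilde f_2$. Concretely, subtracting the two copies of (\ref{eq:coef-to-sol-1}) and noting that the feedback contributions combine as $\mu I_h(u-v_{\tilde f_1})-\mu I_h(u-v_{\tilde f_2})=-\mu I_h w$, I obtain
\[
w_t-\nabla\cdot(\tilde q\nabla w)+\nabla\cdot(\tilde b w)+\tilde c w+\mu I_h w=\tilde f_1-\tilde f_2,\quad w=0 \text{ on }\partial\Omega,\ w(0)=0.
\]
This is exactly the error system of Lemma \ref{lemma2} in the special case $\delta q=\delta b=\delta c=0$ and $\delta f=\tilde f_1-\tilde f_2$, with $v_{\tilde f_2}$ playing the role of the reference solution. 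In particular the regularity and the $(\mu,h)$ conditions assumed here are precisely those under which the $H^1$-seminorm estimate of Lemma \ref{lemma2} was derived.

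Next I would run the energy argument of Lemma \ref{lemma2} on this $w$: test the equation with $\Delta w$, integrate by parts, use the coercivity $(\tilde q\Delta w,\Delta w)\ge q_0\|\Delta w\|^2$, bound the structural terms $(\nabla\tilde q\cdot\nabla w,\Delta w)$, $(\nabla\cdot(\tilde b w),\Delta w)$ and $(\tilde c w,\Delta w)$ by Young's inequality, and treat the feedback term $\mu(I_h w,\Delta w)$ through the $L^2$-projection bound $\|I_h w-w\|\le C_A h|w|_1$ together with $(w,\Delta w)=-|w|_1^2$, which is what produces the $-\mu|w|_1^2$ decay once $h^2\le C_A^2 q_0/2\mu^2$. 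The essential simplification relative to Lemma \ref{lemma2} is that the vanishing of $\delta q,\delta b,\delta c$ removes every coupling term, so the \emph{only} inhomogeneity is $(\tilde f_1-\tilde f_2,\Delta w)\le\|\tilde f_1-\tilde f_2\|\,\|\Delta w\|$; absorbing its $\|\Delta w\|^2$ part into the full coercivity $q_0\|\Delta w\|^2$ (now not needed for any cross terms) leaves, under the stated bound on $\mu$, a differential inequality of the form
\[
\frac{\mathrm d}{\mathrm dt}|w|_1^2+\frac{\mu}{2}|w|_1^2\le\frac{1}{2q_0}\|\tilde f_1-\tilde f_2\|^2.
\]

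With this in hand the conclusion is immediate. Since $w(0)=0$ and $\tilde f_1-\tilde f_2$ is time-independent, the integrating-factor (Gronwall) step gives $|w(t)|_1^2\le\frac{1}{2q_0}\|\tilde f_1-\tilde f_2\|^2\int_0^t e^{\mu(s-t)/2}\,\mathrm ds=\frac{1-e^{-\mu t/2}}{q_0\mu}\|\tilde f_1-\tilde f_2\|^2$, and finally Poincar\'e's inequality controls the full norm by the seminorm, $\|w\|_1^2\le(1+C_A^2)|w|_1^2$, exactly as in Lemmas \ref{lem-boundedness-q-to-v}--\ref{lem-continuity-q-to-v}, which yields the stated bound.

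I do not expect a genuine obstacle here, since the computation is a strict specialization of Lemma \ref{lemma2}; the only points deserving care are (i) checking that the combined feedback term really reduces to $-\mu I_h w$, so that $w$ solves a homogeneous-initial-data problem with forcing $\tilde f_1-\tilde f_2$, and (ii) keeping track of the Young constants so that all $\|\Delta w\|^2$ contributions are absorbed and the forcing coefficient comes out as in the statement. It is worth emphasizing the structural reason this map is \emph{globally} Lipschitz with a constant independent of $\tilde f_1,\tilde f_2$ (in contrast to the merely local estimate of Lemma \ref{lem-continuity-q-to-v}): the equation is affine in $f$, so the difference forcing is exactly $\tilde f_1-\tilde f_2$ with no solution-dependent factor such as the $\nabla\cdot((\tilde q_1-\tilde q_2)\nabla v_{\tilde q_2})$ that appeared in the conductivity case.
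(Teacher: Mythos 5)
Your proposal is correct and is essentially the paper's own (omitted) argument: the paper proves this lemma only by remarking that it follows "from essentially the same discussions" as Lemmas \ref{lemma2} and \ref{lem-continuity-q-to-v}, i.e., by specializing the energy estimate of Lemma \ref{lemma2} to $\delta q=\delta b=\delta c=0$, $\delta f=\tilde f_1-\tilde f_2$, $w(0)=0$, exactly as you do. The only caveat is the one you already flag in point (ii): with the Young-inequality budget of Lemma \ref{lemma2} (which still must absorb the $\nabla\tilde q$, $\tilde b$, $\tilde c$ and feedback contributions into $q_0\|\Delta w\|^2$), the forcing coefficient comes out as a fixed multiple of $1/q_0$ rather than exactly $1/(2q_0)$, so the stated constant is reproduced only up to an absolute factor — a discrepancy already present in the paper's own bookkeeping.
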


\section{Approximated gradients for coefficient reconstruction and the error estimates}\label{sect4}

\subsection{The approximated gradients for the error functional}

In a recent work \cite{newey2025model}, a more mathematically systematic approach is proposed to derive an iterative scheme for recovering constant coefficients in differential equations. It turns out that the feedback term $\mu I_h (u-v)$ provides us with an efficient way to calculate the gradients of some error functional. We now use the techniques in \cite{newey2025model} (which can be easily extended from constant to non-constant cases) to derive a coefficient reconstruction method for non-constant parameters $q(x)$ and also $f(x)$. We shall first consider (\ref{eq:plm-1}) where the source term is independent of $t$, and apply the conclusions to (\ref{eq:elliptic-1}) by viewing it as the steady state of (\ref{eq:plm-1}).

Consider reconstructing the coefficients $q(x)$ and $f(x)$ in (\ref{eq:plm-1}) 
utilizing the corresponding data assimilation equation 
\begin{equation}
\begin{aligned}\label{eq:coefid-2}
    v_t - \nabla \cdot \rbra[\big]{\tilde{q}(x) \nabla v } + \nabla \cdot \bigl( b(x) v \bigr) + c(x) v &= \tilde{f}(x) + \mu I_h (u-v), &&\text{in } (0, T] \times \Omega  \\
    v &= 0,  &&\text{on } [0, T] \times \partial \Omega  \\
    v(0) &= 0, &&
\end{aligned}    
\end{equation}
where $b$, $c$ are given exact coefficients and $\tilde{f} = f$ is given when recovering $q$, and $\tilde{q} = q$ is given when recovering $f$. Note that $f(x)$ here is set to be time-independent, while in Lemma \ref{lemma1} and Lemma \ref{lemma2} we considered the more general time-dependent case. 
As before, we denote by $v_{\tilde{q}, \tilde{f}}(t)$ the solution of (\ref{eq:coefid-2}) at time $t$. Fix some $T>0$, we define an error function
\begin{equation}\label{eq:err-functional}
    J(\tilde{q}, \tilde{f}) := \frac{1}{2} \|I_h u(T) - I_h v_{\tilde{q}, \tilde{f}}(T)\|^2.
\end{equation}
We shall provide approximated gradients of $J$ with respect to $q$ and $f$ in the following. This technique avoids numerically solving the adjoint problems in practice.

\subsubsection{An approximated gradient formula for $q$}

Let $\tilde{f}=f$ in (\ref{eq:coefid-2}) be fixed and write $J(\tilde{q}) = J(\tilde{q}, f)$. We have shown in Lemma \ref{lem-continuity-q-to-v} that $\tilde{q} \mapsto v_{\tilde{q}} (T)$ is continuous from $H^1 (\Omega)$ to $H^1_0 (\Omega)$, thus given a small perturbation $\delta q$, we write the solution $v_{\tilde{q}+\delta q}$ to be
\begin{equation}\label{eq:coefid-3}
    v_{\tilde{q}+\delta q} = v_{\tilde{q}} + \delta v_,
\end{equation}
with $\|\delta v\| \leq C \|\delta q\|_{1}$ for some $C$. For simplicity of notations, let us write (\ref{eq:coefid-3}) as $v_{\tilde{q}+\delta q} = v + \delta v$ for some fixed $\tilde{q}$ we are considering. Then it can be computed that for $J(\tilde{q})$ a perturbation of $\delta q$ in $\tilde{q}$ results in
\begin{equation}
\begin{aligned}\label{eq:loss-perturb-q}
    J(\tilde{q} + \delta q) - J(\tilde{q}) = - (I_h \delta v, I_h (u-v))_{L^2} + \frac{1}{2} \|\delta v\|^2= - (\delta v, I_h (u-v))_{L^2} + \frac{1}{2} \|\delta v\|^2
\end{aligned}
\end{equation}
where the last equality follows from that $I_h$ is an $L^2$ projection, and we have $\|\delta v\|^2 = O\left(\|\delta q\|^2_{1} \right)$, so that $- (\delta v, I_h (u-v))_{L^2}$ serves as $(\nabla_q J(\tilde{q}), \delta q)_{L^2}$, and our goal now is to find a practical approximation of $\delta v$. For this, we perturb (\ref{eq:coefid-2}) with $\delta q$ and write
\begin{equation*}
    (v + \delta v)_t - \nabla \cdot \left[(\tilde{q}+\delta q) \nabla (v + \delta v)\right] + \nabla \cdot \left[b (v+\delta v)\right] + c (v + \delta v) = f + \mu I_h \left[u-(v + \delta v)\right],
\end{equation*}
and $\delta v$ satisfies thus
\begin{equation}\label{eq:grad-q-1}
    (\delta v)_t - \nabla \cdot \left[(\tilde{q}+\delta q) \nabla \delta v\right] + \nabla\cdot \left(b \delta v\right) + c \delta v + \mu I_h \delta v = \nabla \cdot (\delta q \nabla v).
\end{equation}
Define an operator $A_\mu v := - \nabla \cdot \left[(\tilde{q}+\delta q) \nabla \delta v\right] + \nabla\cdot \left(b \delta v\right) + c \delta v + \mu I_h \delta v$, we represent the solution of (\ref{eq:grad-q-1}) using Duhamel's principle and make an approximation using the largeness of $\mu$,
\begin{align}\label{eq:grad-q-2}
    \delta v(T) =  e^{-T A\mu}  \delta v(0) + \int_0^T e^{(s-t)A_\mu} \nabla \cdot \left( \delta q \nabla v(s)\right)\mathrm{d} s \sim \frac{1}{\mu} \nabla \cdot \left( \delta q \nabla v(T)\right).
\end{align}
This approximation in (\ref{eq:grad-q-2}) is an operator analogue of Watson's lemma.
Return to (\ref{eq:loss-perturb-q}), we have
\begin{align*}
    J(\tilde{q} + \delta q) - J(q) &= - (I_h \delta v(q), I_h (u-v))_{L^2} + O\left(\|\delta q\|^2_{1}\right)\\
        & \sim - \frac{1}{\mu} ( \nabla \cdot \left(\delta q \nabla v\right), I_h (u-v))_{L^2} + O\left(\|\delta q\|^2_{1}\right)\\
        & = - \frac{1}{\mu} (\delta q, \nabla v \cdot \nabla I_h (u-v))_{L^2} \Bigg\rvert_{t=T} + O\left(\|\delta q\|^2_{1}\right),
\end{align*} 
so that we write, as we wrote $v = v_{\tilde{q}}$,
\begin{equation}\label{eq:grad-q}
    \nabla_{q, \,\text{aprox}} J(\tilde{q}) = \frac{1}{\mu} \nabla v_{\tilde{q}} \cdot \nabla I_h (u-v_{\tilde{q}})\Bigg\rvert_{t=T},
\end{equation}
where $\nabla_{q, \,\text{aprox}} J(\tilde{q})$ means it is an approximation of the gradient $\nabla_{q} J(\tilde{q})$. While we do not intend to rigorously justify (\ref{eq:grad-q-2}) in the present work, our later error analysis of the coefficient reconstruction scheme would show the effectiveness of such an approximation.

Next we show that $\nabla_{q, \,\text{aprox}} J$ is locally Lipschitz continuous from $H^1(\Omega)$ to $L^2 (\Omega)$.

\begin{prop}\label{grad-q-lip}
    There exists $C_{\tilde{q}_1, \tilde{q}_2} > 0$ such that
    \begin{equation}
        \|\nabla_{q, \,\text{aprox}} J(\tilde{q}_1) - \nabla_{q, \,\text{aprox}} J(\tilde{q}_2)\| \leq \frac{C_{\tilde{q}_1, \tilde{q}_2}}{q_0\mu} \|\tilde{q}_1 - \tilde{q}_2\|_{1}.
    \end{equation}
\end{prop}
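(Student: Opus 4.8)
The plan is to insert the explicit formula \eqref{eq:grad-q} and estimate the $L^2$ norm of the difference directly. Writing $v_i := v_{\tilde q_i}(T)$ for $i=1,2$ and $p_i := I_h(u - v_i)$, we have $\mu\bigl(\nabla_{q,\,\text{aprox}}J(\tilde q_1) - \nabla_{q,\,\text{aprox}}J(\tilde q_2)\bigr) = \nabla v_1\cdot\nabla p_1 - \nabla v_2\cdot\nabla p_2$. Since $I_h$ is linear, $p_1 - p_2 = I_h(v_2 - v_1)$, and the usual cross-term decomposition gives
\begin{equation*}
\nabla v_1\cdot\nabla p_1 - \nabla v_2\cdot\nabla p_2 = \nabla v_1\cdot\nabla I_h(v_2-v_1) + \nabla(v_1-v_2)\cdot\nabla p_2,
\end{equation*}
so that after the triangle inequality it suffices to bound the $L^2$ norm of each of the two products on the right.

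First I would bound each product by a pointwise Cauchy--Schwarz inequality followed by H\"older in the integral, e.g. $\|\nabla a\cdot\nabla b\| \le \|\nabla a\|_{L^4}\|\nabla b\|_{L^4}$ (the exponent $4$ being convenient in two dimensions). The two \emph{smooth} factors $\nabla v_1$ and $\nabla p_2$ are treated as fixed quantities whose norms enter the constant $C_{\tilde q_1,\tilde q_2}$: for $\nabla v_1$ one invokes the $H^2$ regularity assumed through Lemma \ref{lemma2}, the two-dimensional Sobolev embedding $H^1(\Omega)\hookrightarrow L^4(\Omega)$, and the a priori bound of Lemma \ref{lem-boundedness-q-to-v}, giving $\|\nabla v_1\|_{L^4}\le C\|v_1\|_2 < \infty$; for the finite-element factor $\nabla p_2$ one combines the $H^1$-stability of the $L^2$ projection on the quasi-uniform mesh with the inverse estimate \eqref{eq:inverse-estimate} to pass from $L^4$ back to $L^2$ and hence to the (bounded) $H^1$ norm of $u-v_2$. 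All of these contribute only to the finite constant $C_{\tilde q_1,\tilde q_2}$, which may depend on $h$, $u$, $v_1$ and $v_2$.

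The two \emph{difference} factors are $\nabla I_h(v_2-v_1)$ and $\nabla(v_1-v_2)$, and these are exactly where the smallness in $\|\tilde q_1-\tilde q_2\|_1$ is produced: by the $H^1$-stability of $I_h$ and Lemma \ref{lem-continuity-q-to-v} one has $|I_h(v_1-v_2)|_1\le C|v_1-v_2|_1$ and $|v_1-v_2|_1\le\|v_1-v_2\|_1\le C q_0^{-1/2}\|\tilde q_1-\tilde q_2\|_1$, which (after raising the integrability by an inverse estimate if the $L^4$--$L^4$ route is used) supplies the factor $\|\tilde q_1-\tilde q_2\|_1$ together with a negative power of $q_0$. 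Collecting the $1/\mu$ in front of \eqref{eq:grad-q} with the $q_0$-dependence coming from Lemma \ref{lem-continuity-q-to-v} and the boundedness Lemma \ref{lem-boundedness-q-to-v} then yields the claimed prefactor $1/(q_0\mu)$. The main obstacle is precisely that the approximate gradient is a \emph{pointwise product of two gradient fields}, so its $L^2$ norm cannot be controlled by the energy-norm (i.e. $H^1$) Lipschitz estimate of Lemma \ref{lem-continuity-q-to-v} alone; one is forced to raise the integrability of each factor --- via the two-dimensional embedding $H^1\hookrightarrow L^4$ (hence the need for $H^2$ regularity of $v_{\tilde q}$) for the continuous factors and via inverse estimates for the finite-element factors --- while verifying the $H^1$-stability of $I_h$ and ensuring that the whole $\tilde q$-smallness is carried by the single factor estimated through Lemma \ref{lem-continuity-q-to-v}.
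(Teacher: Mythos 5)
Your decomposition is exactly the paper's: the same cross-term splitting
\begin{equation*}
\mu\bigl(\nabla_{q,\,\text{aprox}}J(\tilde q_1)-\nabla_{q,\,\text{aprox}}J(\tilde q_2)\bigr)=\nabla v_{\tilde q_1}\cdot\nabla I_h(v_{\tilde q_1}-v_{\tilde q_2})+\nabla I_h(u-v_{\tilde q_2})\cdot\nabla(v_{\tilde q_2}-v_{\tilde q_1}),
\end{equation*}
followed by Lemmas \ref{lem-boundedness-q-to-v} and \ref{lem-continuity-q-to-v} to extract the factor $\|\tilde q_1-\tilde q_2\|_1/q_0$. Where you diverge is in how the $L^2$ norm of each pointwise product of gradient fields is controlled. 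The paper simply writes $\|\nabla a\cdot\nabla b\|\le|a|_1\,|b|_1$, i.e.\ bounds the $L^2$ norm of a product by the product of $L^2$ norms; as you correctly observe, Cauchy--Schwarz only gives the $L^1$ norm of the product this way, so that step is not justified as written. Your H\"older $L^4$--$L^4$ route with the two-dimensional embedding $H^1\hookrightarrow L^4$ for the continuous factors and inverse estimates for the finite-element factors is the natural repair, at the price of an $h$-dependent (and regularity-dependent) constant $C_{\tilde q_1,\tilde q_2}$, which the statement permits. One loose end remains in your second term: the difference factor there is $\nabla(v_{\tilde q_1}-v_{\tilde q_2})$, which is \emph{not} a finite-element function, so you cannot raise its integrability by an inverse estimate, and Lemma \ref{lem-continuity-q-to-v} controls it only in $L^2$; the clean fix is to pair it in $L^2$ against $\nabla I_h(u-v_{\tilde q_2})$ measured in $L^\infty$, obtained from the inverse estimate applied to the finite-element factor (costing $h^{-1}$ in two dimensions), so that all the $\tilde q$-smallness stays on the $L^2$ factor. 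With that adjustment your argument is complete and, on this point, more rigorous than the paper's.
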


\begin{proof}
    Direct calculations give for the difference $\nabla_{q, \,\text{aprox}} J(\tilde{q}_1) - \nabla_{q, \,\text{aprox}} J(\tilde{q}_2)$ that, with $u = u(T)$, $v_{\tilde{q}_1} = v_{\tilde{q}_1}(T)$ and $v_{\tilde{q}_2} = v_{\tilde{q}_2} (T)$,
    \begin{align*}
        &\nabla_{q, \,\text{aprox}} J(\tilde{q}_1) - \nabla_{q, \,\text{aprox}} J(\tilde{q}_2) \\
        =\, & \frac{1}{\mu} \nabla v_{\tilde{q}_1} \cdot \nabla I_h \left( v_{\tilde{q}_1} - v_{\tilde{q}_2}\right) + \frac{1}{\mu} \nabla I_h \left(u - v_{\tilde{q}_2}\right)\cdot \nabla \left(v_{\tilde{q}_2} - v_{\tilde{q}_1} \right),
    \end{align*}
    so by Lemma \ref{lem-boundedness-q-to-v} and Lemma \ref{lem-continuity-q-to-v},
    \begin{align*}
        &\|\nabla_{q, \,\text{aprox}} J(\tilde{q}+\delta q) - \nabla_{q, \,\text{aprox}} J(\tilde{q})\| \\
        \leq &\frac{1}{\mu}| v_{\tilde{q}_1}|_1 \cdot |I_h (v_{\tilde{q}_1} - v_{\tilde{q}_2})|_1 + \frac{1}{\mu} | I_h (u - v_{\tilde{q}_2})|_1 \cdot | v_{\tilde{q}_2} - v_{\tilde{q}_1}|_1\\
        \leq & \frac{C}{\mu} \left(1 + \frac{\|\tilde{q}_1-q\|_1}{\sqrt{q_0}}  \right)\frac{\|\tilde{q}_1-\tilde{q}_2\|_1}{\sqrt{q_0}} + \frac{C}{\mu}  \left(|u|_1 + 1 + \frac{\|\tilde{q}_2-q\|_1}{\sqrt{q_0}} \right)\frac{\|\tilde{q}_1-\tilde{q}_2\|_1}{\sqrt{q_0}}\\
        \leq & \frac{C_{\tilde{q}_1, \tilde{q}_2}}{q_0\mu} \|\tilde{q}_1 - \tilde{q}_2\|_{1}. \qedhere
    \end{align*}
\end{proof}

\subsubsection{An approximated gradient formula for $f$}

An approximated gradient can be derived for $J(\tilde{f}) = J(q, \tilde{f})$ in a similar way as for $J(\tilde{q})$. Due to Lemma \ref{lem-continuity-f-to-v}, we may write
\begin{equation*}
    v_{\tilde{f}+\delta f} = v_{\tilde{f}} + \delta v
\end{equation*}
where $\|\delta v\|_1 \leq C \|\delta f\|$ for some $C>0$, and $v_{\tilde{f}} = v_{q, \tilde{f}}$ with the exact $q$ given and fixed. As in (\ref{eq:loss-perturb-q}), we have for a perturbation $\delta f(x) \in L^2(\Omega)$,
\begin{equation}
\begin{aligned}\label{eq:loss-perturb-f}
    J(\tilde{f} + \delta f) - J(\tilde{f}) &= - (\delta v, I_h (u-v))_{L^2} + \frac{1}{2} \|\delta v\|^2,
\end{aligned}
\end{equation}
and to find an approximated $\delta v$, we solve
\begin{equation*}
    (\delta v)_t - \nabla \cdot \left(q \nabla \delta v\right) + \nabla\cdot \left(b \delta v\right) + c \delta v + \mu I_h \delta v = \delta f.
\end{equation*}
Then for large $\mu$ and $T$, we have
\begin{align*}
    \delta v(T) = e^{-T A\mu}  \delta v(0) + \int_0^T e^{(s-t)A_\mu} \delta f \mathrm{d} s \sim \frac{1}{\mu} \delta f.
\end{align*}
Thus,
\begin{align*}
    J(\tilde{f} + \delta f) - J(f) \sim - \frac{1}{\mu} (\delta f, I_h (u-v))_{L^2} \Bigg\rvert_{t=T} + O\left(\|\delta f\|^2\right),
\end{align*} 
and we write $\nabla_{f, \,\text{aprox}} J(\tilde{f}) = -\frac{1}{\mu} I_h (u-v) \Big\rvert_{t=T}$. This $\nabla_{f, \,\text{aprox}} J$ is Lipschitz continuous.

\begin{prop}\label{grad-f-lip}
    There exists $C > 0$ such that
    \begin{equation*}
        \|\nabla_{f, \,\text{aprox}} J (\tilde{f}_1) - \nabla_{f, \,\text{aprox}} J(\tilde{f}_2)\| \leq \frac{C}{\mu^{3/2}} \|\tilde{f}_1 - \tilde{f}_2\|.
    \end{equation*}
\end{prop}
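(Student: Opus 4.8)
The plan is to exploit that the formula $\nabla_{f, \,\text{aprox}} J(\tilde{f}) = -\frac{1}{\mu} I_h(u-v_{\tilde{f}})$ (evaluated at $t=T$) is affine in the residual $u - v_{\tilde{f}}$, so that the unknown exact solution $u$ cancels upon subtraction and the estimate reduces entirely to the already-established continuity of $\tilde{f}\mapsto v_{\tilde{f}}(T)$. First I would compute the difference directly: since the same $u$ appears in both gradients,
\[
\nabla_{f, \,\text{aprox}} J(\tilde{f}_1) - \nabla_{f, \,\text{aprox}} J(\tilde{f}_2) = \frac{1}{\mu} I_h\bigl(v_{\tilde{f}_1}(T) - v_{\tilde{f}_2}(T)\bigr).
\]
This already shows that the whole problem is to control $\|v_{\tilde{f}_1}(T) - v_{\tilde{f}_2}(T)\|$ by $\|\tilde{f}_1 - \tilde{f}_2\|$, which is precisely the content of Lemma \ref{lem-continuity-f-to-v}.

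Next I would pass from $I_h$ to the full solution difference. Because $I_h$ is the $L^2$ projection onto $V_h$, it is nonexpansive in the $L^2$ norm, so $\|I_h(v_{\tilde{f}_1}(T) - v_{\tilde{f}_2}(T))\| \leq \|v_{\tilde{f}_1}(T) - v_{\tilde{f}_2}(T)\| \leq \|v_{\tilde{f}_1}(T) - v_{\tilde{f}_2}(T)\|_1$, the last step simply dominating the $L^2$ norm by the $H^1$ norm. Applying Lemma \ref{lem-continuity-f-to-v} at the fixed time $t = T$ then gives
\[
\|v_{\tilde{f}_1}(T) - v_{\tilde{f}_2}(T)\|_1 \leq \sqrt{(1+C_A^2)\frac{1 - e^{-\mu T/2}}{q_0 \mu}}\,\|\tilde{f}_1 - \tilde{f}_2\|.
\]

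Finally I would collect the powers of $\mu$. The prefactor $1/\mu$ from the gradient formula multiplies the $\mu^{-1/2}$ coming from the $1/(q_0\mu)$ inside the square root of the Lipschitz bound, producing exactly the claimed $\mu^{-3/2}$. Since $1 - e^{-\mu T/2} \leq 1$, the remaining constant can be taken independent of $\mu$ (and of $T$), for instance $C = \sqrt{(1+C_A^2)/q_0}$, which completes the argument.

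As for the main obstacle: there is essentially no hard step, since the proposition is an almost immediate corollary of Lemma \ref{lem-continuity-f-to-v}. The only points needing care are (i) recognizing that the affine dependence of the formula on $u - v_{\tilde{f}}$ makes the $u$-contribution cancel, so the estimate never requires any regularity of $u$ beyond what the lemma already uses, and (ii) correctly tracking the extra factor $\mu^{-1/2}$ that upgrades the $\mu^{-1}$ continuity of the map $\tilde{f}\mapsto v_{\tilde{f}}(T)$ into the $\mu^{-3/2}$ of the statement. I would also note, in contrast to Proposition \ref{grad-q-lip}, that the Lipschitz constant here is genuinely global rather than local, reflecting that $\tilde{f}\mapsto v_{\tilde{f}}(T)$ is globally Lipschitz.
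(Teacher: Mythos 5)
Your proposal is correct and takes essentially the same route as the paper's own one-line proof: rewrite the difference as $\frac{1}{\mu} I_h\bigl(v_{\tilde{f}_1}(T)-v_{\tilde{f}_2}(T)\bigr)$ (the $u$-terms cancel) and invoke Lemma \ref{lem-continuity-f-to-v}. You merely make explicit the steps the paper leaves implicit --- the $L^2$-nonexpansiveness of $I_h$, dominating the $L^2$ norm by the $H^1$ norm, and the bookkeeping $\mu^{-1}\cdot\mu^{-1/2}=\mu^{-3/2}$ --- all of which are accurate.
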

\begin{proof}
    Note that $\nabla_{f, \,\text{aprox}} J (\tilde{f}_1) - \nabla_{f, \,\text{aprox}} J (\tilde{f}_2) =\frac{1}{\mu} I_h (v_{\tilde{f}_1}-v_{\tilde{f}_2}) \Big\rvert_{t=T}$, then Lemma \ref{lem-continuity-f-to-v} implies the conclusion.  
\end{proof}

\subsubsection{Application to the elliptic case} \label{sec:ellip-aprox-grad}

Now consider the reconstruction of $q$ and $f$ in the elliptic equation (\ref{eq:elliptic-1}) regarded as the steady state of (\ref{eq:plm-1}) using the assimilation equation 
\begin{equation}\label{eq:elliptic-3}
\begin{aligned}
    -\nabla \cdot ( \tilde{q} \nabla v ) + \nabla \cdot (b v) + c v &= f + \mu I_h(u-v), &&\text{in } \Omega\\
    v &= 0, &&\text{on } \partial \Omega
\end{aligned}
\end{equation}
The error functional is (\ref{eq:err-functional}) interpreted with $T\to\infty$. We use the finite element method(FEM) to recover $q$ and the suppose reconstruction $q_{h_q}$ is in a finite element space $V_{h_q}$ with ${\Phi_j}_{1\leq j \leq m_q}$ its basis. Then we can write $\tilde{q}_{h_q} = \sum_{1\leq j \leq m_q} d_j \Phi_j$, and with (\ref{eq:grad-q}) we update each $d_j$ iteratively, in a typical gradient descent formulation for example, by 
\begin{equation}\label{eq:coef-q-update}
    d_j^{(n+1)} - d_j^{(n)} = -\frac{r}{\mu} \int_\Omega  \nabla v_{\tilde{q}_{h_q}^{(n)}} \cdot \nabla I_h (u-v_{\tilde{q}_{h_q}^{(n)}}) \cdot \Phi_j,
\end{equation}
where $r$ is a step size to be taken appropriately. The similar is for the reconstruction of $f$, i.e., if $\tilde{f}_{h_f} = \sum_{1\leq j \leq m_f}e_j \Phi_j$ in a finite element space $V_{h_f}$, then 
\begin{equation}\label{eq:coef-f-update}
    e_j^{(n+1)} - e_j^{(n)} = \frac{r}{\mu} \int_\Omega I_h (u-v_{\tilde{q}_{h_q}^{(n)}}) \cdot \Phi_j,
\end{equation}
as a simple sample of gradient descent. Other gradient methods can be applied as well, and Proposition \ref{grad-q-lip} and \ref{grad-f-lip} carry over to the elliptic case with (see also Lemma \ref{lem-continuity-q-to-v} and Lemma \ref{lem-continuity-f-to-v}, letting $t\to \infty$ there)
\begin{equation}\label{eq:q-elliptic-lipbound}
    \|\nabla_{q, \,\text{aprox}} J(\tilde{q}_1) - \nabla_{q, \,\text{aprox}} J(\tilde{q}_2)\| \leq \frac{C_{\tilde{q}_1, \tilde{q}_2}}{q_0 \mu^{3/2}} \|\tilde{q}_1 - \tilde{q}_2\|_{1},
\end{equation}
\begin{equation}\label{eq:f-elliptic-lipbound}
    \|\nabla_{f, \,\text{aprox}} J (\tilde{f}_1) - \nabla_{f, \,\text{aprox}} J(\tilde{f}_2)\| \leq \frac{C}{\mu^{3/2}} \|\tilde{f}_1 - \tilde{f}_2\|.
\end{equation}
which provides good properties for the application of gradient methods.

\subsection{Error estimates for reconstruction of $q$}

In view of formula (\ref{eq:grad-q}), we write our formulation for the reconstruction of the parameter $q$ in the following form, which is more abstract than (\ref{eq:coef-q-update}): Introduce a parameter $z\in [0, \infty)$ (one may imagine $z$ as a time variable) and let $\tilde{q}_{h_q} \in V_{h_q}$ evolve with
\begin{equation}\label{eq:idetification-q-discrete}
    \begin{cases}
        - \nabla \cdot (\tilde{q}_{h_q} (z) \nabla v(z)) + \nabla \cdot (\tilde{b} v(z)) + \tilde{c} v(z) = \tilde{f} + \mu I_h (u-v(z)),\\
        \int_\Omega \Phi_j \frac{\mathrm{d}}{\mathrm{d} z} \tilde{q}_{h_q} (z) = - \frac{1}{\mu} \int_\Omega \Phi_j \nabla v \cdot \nabla  I_h (u-v(z)), \quad \tilde{q}_{h_q} (0) \in V_{h_q} \text{ the given initial guess}.
    \end{cases}
\end{equation}
Here $\int_\Omega \Phi_j \frac{\mathrm{d}}{\mathrm{d} z} \tilde{q}_h (z)$ should be interpreted as $\frac{\mathrm{d}}{\mathrm{d} z} \tilde{q}_{h_q} (z)$ in the direction $\Phi_j$, and thus $\int_\Omega \varphi \frac{\mathrm{d}}{\mathrm{d} z} \tilde{q}_{h_q} (z)$ is not defined for $\varphi \notin V_{h_q}$. We emphasize that the inputted $\tilde{b}$, $\tilde{c}$ and $\tilde{f}$ are not necessarily the exact coefficients, and the noise in the data $I_h u$ can be regarded as error in $\tilde{f}$ (Remark \ref{rmk2}).
We have from (\ref{eq:q-elliptic-lipbound}) and the inverse estimate (\ref{eq:inverse-estimate}) that $\nabla v(z) \cdot \nabla  I_h (u-v(z)) = \nabla v_{\tilde{q}_{h_q}} \cdot \nabla  I_h (u-v_{\tilde{q}_{h_q}})$ is locally Lipschitz continuous with fixed $h_q$ for $\tilde{q}_{h_q}$ in $L^2(\Omega)$. Hence, (\ref{eq:idetification-q-discrete}) has a unique solution in $L^2 (\Omega)$ for $z\in [0,\infty)$ \cite{neuberger2009sobolev}.


We shall focus on the semi-discrete (\ref{eq:idetification-q-discrete}) for the error analysis. First, we need the following lemma.

\begin{lem}\label{lem6}
    Suppose $q_{h_q}(z) \in V_{h_q} (z\geq 0)$ satisfies (\ref{eq:idetification-q-discrete}), and $I_{h_q} q$ is the $L^2$ projection of $q$ in $V_{h_q}$ with $I_{h_q} q (x) \geq C_{I_{h_q}} q_0$ ($\forall x \in \Omega$) for some $C_{I_{h_q}}>0$. Then, with $w(z) = u-v(z)$ and $\delta q(z) = \tilde{q}_{h_q}(z) - q$, we have 
    \begin{multline*}
        \mu \frac{\mathrm{d}}{\mathrm{d}z}\|\delta q(z)\|^2 + C_{I_{h_q}} q_0 |w|_1^2 + \mu \|I_h w\|^2 \leq C \|w\|\left( \|\nabla \cdot \left((q-I_{h_q} q) \nabla u\right)\| + |\delta b u|_1 + \|\delta c u\| + \|\delta f\|\right) \\
        + C \|w\||w|_1 + C h |w|_1 \left(\|\delta f\| + \|\nabla \cdot (\delta q \nabla u)\| + |\delta b u|_1 + \| \delta c u \|\right).
    \end{multline*}
    where $\delta b = \tilde{b} - b$, $\delta c = \tilde{c} - c$ and $\delta f = \tilde{f} - f$, and $C > 0$ depends on $C_A$, $q_1$ and $b_1$.
\end{lem}

\begin{proof}
    Denote $\delta I_{h_q} q = \tilde{q}_{h_q} - I_{h_q} q$. It can be calculated that for $w$ we have
    \begin{equation*}
        -\nabla \cdot (I_{h_q} q \nabla w) + \nabla \cdot (\delta I_{h_q} q \nabla v) + \nabla \cdot (\tilde{b} w) + \tilde{c} w + \mu I_h w = \nabla \cdot ((q-I_{h_q} q) \nabla u) + \nabla \cdot \delta b u + \delta c u - \delta f,
    \end{equation*}
    and thus inserting $I_h w$ into its weak formulation, we obtain
    \begin{multline}\label{eq:lem4-1}
        \int_\Omega I_{h_q} q \nabla w \cdot \nabla I_hw - \delta I_{h_q} q \nabla v \cdot \nabla I_h w +(b w) \cdot \nabla I_h w + c I_h w^2 + \mu I_h w^2\\
        = \int_\Omega \left(\nabla \cdot ((q-I_{h_q} q) \nabla u) + \nabla \cdot \delta b u + \delta c u - \delta f \right) I_h w.
    \end{multline}
    On the other hand, since $\int_\Omega \Phi_j \frac{\mathrm{d}}{\mathrm{d} z} \tilde{q}_{h_q}(z) = - \frac{1}{\mu} \int_\Omega \Phi_j \nabla v \cdot \nabla  I_h (u-v)$, replacing $\Phi_j$ with $\delta I_{h_q} q \in V_{h_q}$ gives
    \begin{equation}\label{eq:lem4-2}
        \int_\Omega \delta I_{h_q} q \frac{\mathrm{d}}{\mathrm{d} z} \tilde{q}_{h_q}(z) + \frac{1}{\mu} \delta I_{h_q} q \nabla v \cdot \nabla  I_h (u-v) = 0.
    \end{equation}
    Then adding (\ref{eq:lem4-1}) to (\ref{eq:lem4-2}) and noting that $\int_\Omega \delta I_{h_q} q \frac{\mathrm{d}}{\mathrm{d} z} \tilde{q}_{h_q}(z) = \frac{1}{2}\frac{\mathrm{d}}{\mathrm{d}z}\|\delta I_{h_q} q (z)\|^2 = \frac{1}{2}\frac{\mathrm{d}}{\mathrm{d}z}\|\delta q (z)\|^2$ yields
    \begin{multline}\label{eq:lem4-3}
        \frac{\mu}{2}\frac{\mathrm{d}}{\mathrm{d}z}\|\delta q(z)\|^2 + \int_\Omega I_{h_q} q \nabla w \cdot \nabla I_hw - (b w) \cdot \nabla I_h w + c I_h w^2 + \mu I_h w^2\\
        = \int_\Omega \left(\nabla \cdot ((q-I_{h_q} q) \nabla u) + \nabla \cdot \delta b u + \delta c u - \delta f \right) I_h w.
    \end{multline}
    Next, since $\int_\Omega (b w) \cdot \nabla I_h w \leq b_1 C \|w\| |w|_1$, and using (\ref{eq:h2-est}) we have
    \begin{align*}
        \int_\Omega I_{h_q} q\nabla w \cdot \nabla I_h w \geq C_{I_{h_q}} q_0 |w|_1^2 - C q_1 |w|_1 |w- I_h w|_1 \geq & C_{I_{h_q}} q_0 |w|_1^2 - C q_1 h |w|_1 |w|_2\\
        \geq & C_{I_{h_q}} q_0 |w|_1^2 - C q_1 h |w|_1 \left(\|\delta f\| + \|\nabla \cdot (\delta q \nabla u)\| + |\delta b u|_1 + \| \delta c u \|\right).
    \end{align*}
    Thus, in (\ref{eq:lem4-3}) we have for some $C > 0$ depending on $C_A$, $q_1$ and $b_1$  that
    \begin{multline*}
        \mu \frac{\mathrm{d}}{\mathrm{d}z}\|\delta q(z)\|^2 + C_{I_{h_q}} q_0 |w|_1^2 + \mu \|I_h w\|^2 \leq C \|w\|\left( \|\nabla \cdot \left((q-I_{h_q} q) \nabla u\right)\| + |\delta b u|_1 + \|\delta c u\| + \|\delta f\|\right) \\
        + C \|w\||w|_1 + C h |w|_1 \left(\|\delta f\| + \|\nabla \cdot (\delta q \nabla u)\| + |\delta b u|_1 + \| \delta c u \|\right). \qedhere
    \end{multline*}
    
\end{proof}

\begin{rmk}
    \cite{hoffmann1985identification} inspires the preceding Lemma \ref{lem6}. In fact, if we use $\mu(u-v)$ instead of $\mu I_h (u-v)$ as the feedback term and replace $\tilde{q}_{h_q}$ by $\tilde{q} \in H^1(\Omega)$ in (\ref{eq:idetification-q-discrete}), we may show that $\frac{\mathrm{d}}{\mathrm{d}z}\|\tilde{q}(z)-q\|^2 < 0$ so $\|\tilde{q}(z)-q\|^2$ is monotonic-decreasing, which is a situation similar to \cite{hoffmann1985identification}. Lemma \ref{lem6} is a modification of this result in the finite-dimensional case and takes into account the error in other coefficients.
\end{rmk}

Next, we derive an ODE inequality for the $L^2$ error $\|\delta q(z)\|$. We shall assume the following \textit{positivity condition}\cite{bonito2017diffusion}: For some $\beta \geq 0$, it holds that
\begin{equation}\label{eq:positivity-condition}\tag{$\text{PC}$}
    \left(|\nabla u|^2 - \frac{1}{4}\Delta (u^2)\right)(x) \geq c_{\text{pc}} \text{dist}(x, \partial \Omega)^\beta, \quad \text{for any } x\in\Omega.
\end{equation}

\begin{rmk}
    The positivity condition (\ref{eq:positivity-condition}) is different from the one that appears in \cite{bonito2017diffusion}, which is $\left(q |\nabla u|^2 + f u\right)(x) \geq c \,\text{dist}(x, \partial \Omega)^\beta$. For our positivity condition (\ref{eq:positivity-condition}), we emphasize that it can be verified directly from the solution $u$ itself without knowledge of $q$ and $f$. 
\end{rmk}

\begin{lem}\label{lem-deltaq-ineq}
    Assume (\ref{eq:positivity-condition}) for some $\beta \geq 0$ and the conditions of Lemma \ref{lemma1}, Lemma \ref{lemma2} and Lemma \ref{lem6}, and $(8C_A^2 C_{\text{inv}}^2 h_q^{-2} q_1^2 + b_1^2 + C_P^2 c_1^2)/\mu \leq  C_{I_{h_q}} q_0$ with $C_{I_{h_q}}$ in Lemma \ref{lem6}. Let $\tilde{q}_{h_q}$ be in the finite dimensional space $V_{h_q}$ and $q \in H^m(\Omega)$ ($m\geq 1$), we have for $\delta q = \tilde{q}_{h_q} - q$,
    \begin{align}\label{eq:lem-deltaq-ineq}
    \begin{split}
       &C_{q_1,\Omega,\text{pc}}^{-1} \mu^2 |u|_1^2 \frac{\mathrm{d}}{\mathrm{d}z}\|\delta q\|^2 + C_{q_1,\Omega,\text{pc}}\|\delta q\|^{2(1+\beta)} - C_{q_1,\Omega,\text{pc}}\left( \frac{1}{2} +  \frac{3}{\sqrt{\mu}} + h\sqrt{\mu} \right) \|\delta q\|^2\\
        \leq \,& M h_q^{-2} (1 + h \sqrt{\mu} + \frac{1}{\sqrt{\mu}}) \left(|\delta b u|_1^2 + \|\delta c u\|^2 + \|\delta f\|^2 \right) + M \left( 1 + h_q^2\left(1+h\sqrt{\mu} + \frac{1}{\sqrt{\mu}}\right) \right) h_q^{2(m-2)} |q|_m^2,
    \end{split}
    \end{align}
    where $M$ depends on $|u|_1$, $\|u\|_2$, $C_A$, $C_{\textbf{inv}}$ and $C_{q_1,\Omega,\text{pc}}$
\end{lem}

\begin{proof}
    For $w(z) = u - v(z)$, we have
    \begin{equation*}
        -\nabla \cdot (\delta q \nabla u) = -\nabla\cdot (\tilde{q}_{h_q} \nabla w) + \nabla \cdot (b w) + \mu I_h w + c w + \nabla \cdot \delta b u + \delta c u - \delta f,
    \end{equation*}
    and thus for any $\varphi \in H^1_0 (\Omega)$,
    \begin{equation}\label{eq:lem-7-0}
        \int_\Omega \delta q \nabla u\cdot \nabla\varphi = \int_\Omega \tilde{q}_{h_q} \nabla w\cdot \nabla \varphi + (b w)\cdot \nabla \varphi + c w \varphi + \mu I_h w \varphi + (\nabla \cdot \delta b u + \delta c u - \delta f)\varphi.
    \end{equation}
    For the left side of (\ref{eq:lem-7-0}), the choice $\varphi = \delta q u$ gives (which is also the reason for this choice)
    \begin{align*}
         \int_\Omega \delta q \nabla u\cdot \nabla(\delta q u) &= \int_\Omega \delta q^2 |\nabla u|^2 + \delta qu \nabla \delta q\cdot \nabla u\\
         & = \int_\Omega \delta q^2 |\nabla u|^2 + \frac{1}{4}\nabla (\delta q^2) \cdot \nabla (u^2)\\
         & = \int_\Omega \delta q^2 |\nabla u|^2 - \frac{1}{4}\delta q^2 \Delta (u^2) = \int_\Omega \delta q^2 \left(|\nabla u|^2 - \frac{1}{4}\Delta (u^2)\right).
    \end{align*}
    Thus with the positivity condition (\ref{eq:positivity-condition}), denoting $\Omega_\rho = \{x\in\Omega : \, \text{dist}(x, \partial \Omega) > \rho \}$ with some $\rho$ to be determined , we may estimate (see also \cite{bonito2017diffusion}, \cite{jin2021error}, \cite{cen2024numerical} for such derivation)
    \begin{align*}
        \int_{\Omega_\rho} \delta q^2 = \rho^{-\beta} \int_{\Omega_\rho} \delta q^2 \rho^\beta \leq  \rho^{-\beta} c_{\text{pc}}^{-1} \int_{\Omega_\rho} \delta q^2  \left(|\nabla u|^2 - \frac{1}{4}\Delta (u^2)\right) \leq  \rho^{-\beta} c_{\text{pc}}^{-1} \int_{\Omega} \delta q^2  \left(|\nabla u|^2 - \frac{1}{4}\Delta (u^2)\right),
    \end{align*}
    and 
    \begin{align*}
        \int_{\Omega - \Omega_\rho} \delta q^2 \leq \rho C_{q_1,\Omega} .
    \end{align*}
    Hence setting $\rho = \left[\int_{\Omega} \delta q^2  \left(|\nabla u|^2 - \frac{1}{4}\Delta (u^2)\right)\right]^{1+\beta}$ we see that for some $C_{q_1,\Omega,\text{pc}}>0$,
    \begin{equation*}
       C_{q_1,\Omega,\text{pc}} \int_{\Omega} \delta q^2 \leq \left[\int_{\Omega} \delta q^2  \left(|\nabla u|^2 - \frac{1}{4}\Delta (u^2)\right)\right]^{\frac{1}{1+\beta}},
    \end{equation*}
    which is $C_{q_1,\Omega,\text{pc}}\|\delta q\|^{2(1+\beta)} \leq \int_{\Omega} \delta q^2  \left(|\nabla u|^2 - \frac{1}{4}\Delta (u^2)\right)$.
    
    Next, inserting $\varphi = \delta q u \in H^1_0 (\Omega)$ in the right side of (\ref{eq:lem-7-0}), we have for some $C, \overline{C} > 0$ to be determined later that,
    \begin{align}
       &\int_\Omega \tilde{q}_{h_q} \nabla w\cdot \nabla \delta q u + (\tilde{b} w)\cdot \nabla \delta q u + \tilde{c} w \delta q u + \mu I_h w \delta q u  + (\nabla \cdot \delta b u + \delta c u - \delta f)\delta q u\nonumber\\
       \leq \, & q_1 |w(z)|_1 |\delta q(z) u|_1 + b_1 |w(z)|_1 \|\,\delta q(z) u\| + c_1 \|w(z)\|\,\|\delta q(z) u\| + \mu \|I_h w(z)\|\, \|\delta q(z) u\| + \int_\Omega (\nabla \cdot \delta b u + \delta c u - \delta f)\delta q u \nonumber\\
       \leq &\, C^{-1} \mu \left( \frac{C \overline{C}^{-1} h_q^{-2} q_1^2 + b_1^2 + C_P^2 c_1^2}{\sqrt{\mu}} |u|_1^2  |w|_1^2 + \mu \|u\|^2 \|I_h w\|^2  \right)  + C^{-1} \left(|\delta b u|_1^2 + \|\delta c u\|^2 + \|\delta f\|^2 \right) \|u\|^2 \nonumber\\
       &\quad + \left(\frac{C}{\sqrt{\mu}} + \frac{C}{4}\right) \|\delta q\|^2 + h_q^2 \frac{\overline{C}}{\sqrt{\mu}} |\delta q|_1^2 \nonumber\\ 
       \leq &\, -C^{-1}\mu^2 |u|_1^2 \frac{\mathrm{d}}{\mathrm{d}z}\|\delta q\|^2 + C^{-1} \mu |u|_1^2  \left( \|\nabla \cdot \left((q-I_{h_q} q) \nabla u\right)\| + |\delta b u|_1 + \|\delta c u\| + \|\delta f\|\right) \|w\|\nonumber\\
       &  + C^{-1} \mu |u|_1^2 \|w\| |w|_1 + C^{-1} \mu h |u|_1^2 |w|_1 (\|\delta f\| + \|\nabla \cdot (\delta q \nabla u) \| + |\delta b u|_1 + \|\delta c u\|)\label{eq:lem-7-1.1}\\
       & + C^{-1}\left(|\delta b u|_1^2 + \|\delta c u\|^2 + \|\delta f\|^2 \right) \|u\|^2 + \left(\frac{C}{\sqrt{\mu}} + \frac{C}{4}\right) \|\delta q\|^2 + h_q^2 \frac{\overline{C}}{\sqrt{\mu}} |\delta q|_1^2 \nonumber \\
       \leq &\, -C^{-1}\mu^2 |u|_1^2 \frac{\mathrm{d}}{\mathrm{d}z}\|\delta q\|^2 + C^{-1} \tilde{C}^{-2} h_q^{-2} |u|_1^4 (1 + h \sqrt{\mu} + \frac{1}{\sqrt{\mu}}) \left(|\delta b u|_1^2 + \|\delta c u\|^2 + \|\delta f\|^2 \right)  \label{eq:lem-7-1.2}\\
       & + C^{-1} \tilde{C}^{-2} h_q^{-2} |u|_1^4 \|u\|_2^2 \|q - I_{h_q}q\|_1^2 + \left[ \tilde{C} h_q^2 \|u\|_2^2 \left(h \sqrt{\mu} + \frac{1}{\sqrt{\mu}} + 1\right) + h_q^{2} \frac{\overline{C}}{\sqrt{\mu}} \right] \|\delta q\|_1^2 + \left(\frac{C}{\sqrt{\mu}} + \frac{C}{4}\right) \|\delta q\|^2\nonumber,
    \end{align}
    where we used Lemma \ref{lem6} in (\ref{eq:lem-7-1.1}) with $(C \overline{C}^{-1}h_q^{-2} q_1^2 + b_1^2 + C_P^2 c_1^2)/\mu \leq  C_{I_{h_q}} q_0$, and Corollary \ref{coro1} and Corollary \ref{coro2} in (\ref{eq:lem-7-1.2}). Then, using the following estimate for $\|\delta q\|_1$,
    \begin{align*}
        \|\delta q\|_1^2 = \|\tilde{q}_{h_q} - I_{h_q} q + I_{h_q} q - q\|_1^2 &\leq 2 \|\tilde{q}_{h_q} - I_{h_q} q\|_1^2 + 2\|I_{h_q} q - q\|_1^2 \leq 2 C_A^2C_{\text{inv}}^2 h_q^{-2} \|\tilde{q}_{h_q} - q\|^2 + 2\|I_{h_q} q - q\|_1^2,
    \end{align*}
    and choosing $C = C_{q_1,\Omega,\text{pc}}$, $\overline{C} = C_A^{-2}C_{\text{inv}}^{-2} C_{q_1,\Omega,\text{pc}}/8$ and $\tilde{C} = C_A^{-2}C_{\text{inv}}^{-2} \|u\|_2^{-2}C_{q_1,\Omega,\text{pc}}/8$, we continue to estimate to obtain
    \begin{align}
    &\int_\Omega \tilde{q}_{h_q} \nabla w\cdot \nabla \delta q u + (\tilde{b} w)\cdot \nabla \delta q u + \tilde{c} w \delta q u + \mu I_h w \delta q u  + (\nabla \cdot \delta b u + \delta c u - \delta f)\delta q u\nonumber\\
       \leq &\, -C_{q_1,\Omega,\text{pc}}^{-1} \mu^2 |u|_1^2 \frac{\mathrm{d}}{\mathrm{d}z}\|\delta q\|^2 + C_{q_1,\Omega,\text{pc}}\left( \frac{1}{2} +  \frac{3}{\sqrt{\mu}} + h\sqrt{\mu} \right) \|\delta q\|^2\nonumber\\
       & + M h_q^{-2} (1 + h \sqrt{\mu} + \frac{1}{\sqrt{\mu}}) \left(|\delta b u|_1^2 + \|\delta c u\|^2 + \|\delta f\|^2 \right) + M \left( 1 + h_q^2\left(1+h\sqrt{\mu} + \frac{1}{\sqrt{\mu}}\right) \right) h_q^{-2} \|q - I_{h_q}q\|_1^2 \nonumber\\
       \leq &\, -C_{q_1,\Omega,\text{pc}}^{-1} \mu^2 |u|_1^2 \frac{\mathrm{d}}{\mathrm{d}z}\|\delta q\|^2 + C_{q_1,\Omega,\text{pc}}\left( \frac{1}{2} +  \frac{3}{\sqrt{\mu}} + h\sqrt{\mu} \right) \|\delta q\|^2 \nonumber\\
       & + M h_q^{-2} (1 + h \sqrt{\mu} + \frac{1}{\sqrt{\mu}}) \left(|\delta b u|_1^2 + \|\delta c u\|^2 + \|\delta f\|^2 \right) + M \left( 1 + h_q^2\left(1+h\sqrt{\mu} + \frac{1}{\sqrt{\mu}}\right) \right) h_q^{2(m-2)} |q|_m^2 \nonumber,
    \end{align}
    with some $M$ depending on $|u|_1$, $\|u\|_2$, $C_A$, $C_{\textbf{inv}}$ and $C_{q_1,\Omega,\text{pc}}$.
    
    Combining the estimates for the two sides of (\ref{eq:lem-7-0}) , we see
    \begin{align*}
        &C_{q_1,\Omega,\text{pc}}^{-1} \mu^2 |u|_1^2 \frac{\mathrm{d}}{\mathrm{d}z}\|\delta q\|^2 + C_{q_1,\Omega,\text{pc}}\|\delta q\|^{2(1+\beta)} - C_{q_1,\Omega,\text{pc}}\left( \frac{1}{2} +  \frac{3}{\sqrt{\mu}} + h\sqrt{\mu} \right) \|\delta q\|^2\\
        \leq \,& M h_q^{-2} \left(1 + h \sqrt{\mu} + \frac{1}{\sqrt{\mu}}\right) \left(|\delta b u|_1^2 + \|\delta c u\|^2 + \|\delta f\|^2 \right) + M \left( 1 + h_q^2\left(1+h\sqrt{\mu} + \frac{1}{\sqrt{\mu}}\right) \right) h_q^{2(m-2)} |q|_m^2,
    \end{align*}
    This completes the proof. 
\end{proof}
    
    

Let us investigate the estimate in Lemma \ref{lem-deltaq-ineq} to obtain $\lim_{z\to\infty} \|\delta q(z)\|^2$. For ease of notation, we write (\ref{eq:lem-deltaq-ineq}) as an ODE inequality for some $\theta(z)$
\begin{equation}\label{eq:ode-1}
    \theta'(z) + C_1 \theta(z)^{1+\beta} - C_2 \theta (z) \leq C,
\end{equation}
where $C_1, C_2, M > 0$ and $\beta \geq 0$ are constants. If $\beta = 0$, (\ref{eq:ode-1}) becomes an inequality about a simple linear ODE, and the behavior of $\theta(z)$ follows directly from integrating both sides. If $\beta > 0$, the nonlinear ODE
\begin{equation*}
    \xi'(z) + C_1 \xi(z)^{1+\beta} - C_2 \xi (z) = C, \quad \xi(0)=\xi_0 > 0
\end{equation*}
is solved by 
\begin{equation*}
    \xi(z) = W^{-1} (z)
\end{equation*}
where $W^{-1}$ is the inverse function of 
\begin{equation*}
    W(\xi) = \int_{\xi_0}^\xi \frac{1}{C + C_2 s -C_1 s^{1+\beta}} \mathrm{d}s.
\end{equation*}
Now some fundamental properties of $W(\xi)$ are useful: $W(\xi)$ is monotonically increasing and $W^{-1} (\xi)$ is thus well-defined for $0<\xi<\xi^\ast$ or $\xi>\xi^\ast$, where $\xi^\ast$ is the unique positive solution to $M + C_2 \xi^\ast - C_1 \xi^{\ast(1+\beta)} = 0$; in addition, $W(\xi) \to \infty$ as $\xi \to \xi^\ast+$ or $\xi \to \xi^\ast-$. Thus, it can be seen that we have $\xi(z)$ monotonically increasing if $\xi_0<\xi^\ast$ and $\xi(z)$ monotonically decreasing if $\xi_0>\xi^\ast$, with $\lim_{z\to\infty} \xi(z) = \xi^\ast$ in both cases. Hence, by the comparison theorem for solutions to ODEs \cite[\S 9]{walter2013ordinary}, for the solution to (\ref{eq:ode-1}), we have $\theta(z) \leq \xi(z)$ setting $\theta(0) = \xi_0$, and $\lim_{z\to\infty} \theta(z) \leq \xi^\ast$.



The preceding investigation gives us the estimate for $\lim_{z\to\infty} \|\delta
 q(z)\|$ when $\tilde{q}_{h_q} \in V_{h_q}$. The error $\delta I_h u$ in the given data, considered to be included in $\|\delta f\|$ before, is explicitly expressed here. 

\begin{thm}\label{thm-q-err}
    Assume (\ref{eq:positivity-condition}) for some $\beta \geq 0$ and the conditions of Lemma \ref{lemma2} and Lemma \ref{lem6}. Suppose $\tilde{q}_{h_q} \in V_{h_q}$, $q \in H^m(\Omega)$ ($m\geq 1$), $C>0$ to be a constant and $\alpha \in (0, 1)$.\\
    1) If $\beta = 0$ and $C_0 := C_{q_1,\Omega,\text{pc}}\left( 1/2 - 3/\sqrt{\mu} - h\sqrt{\mu} \right) > 0$, then
    \begin{equation*}
        \lim_{z\to\infty} \|\delta q(z)\|^2 \leq C_0^{-1} M h_q^{-2} \left(1 + h \sqrt{\mu} + \frac{1}{\sqrt{\mu}}\right) \left(|\delta b u|_1^2 + \|\delta c u\|^2 + \|\delta f\|^2 +\mu^2 \|\delta I_h u\|^2 \right) + C_0^{-1} M h_q^{2(m-2)} |q|_m^2.
    \end{equation*}
    2) If $\beta > 0$, then we have a monotonically decreasing upper bound for $\|\delta q(z)\|^2$, and
    \begin{equation}\label{eq:est-thm-q-err}
        \lim_{z\to\infty} \|\delta q(z)\|^2 \leq \xi^\ast,
    \end{equation}
    where $\xi^\ast$ is the unique positive solution to 
    \begin{align*}
        &C_{q_1,\Omega,\text{pc}} \xi^{\ast(1+\beta)} - C_{q_1,\Omega,\text{pc}}\left( \frac{1}{2} +  \frac{3}{\sqrt{\mu}} + h\sqrt{\mu} \right) \xi^\ast\\
        = \,& M h_q^{-2} \left(1 + h \sqrt{\mu} + \frac{1}{\sqrt{\mu}}\right) \left(|\delta b u|_1^2 + \|\delta c u\|^2 + \|\delta f\|^2 + \mu^2 \|\delta I_h u\|^2 \right) + M \left( 1 + h_q^2\left(1+h\sqrt{\mu} + \frac{1}{\sqrt{\mu}}\right) \right) h_q^{2(m-2)} |q|_m^2.
    \end{align*}
    In particular, if $\beta = 1$, writing $ C_{\text{err},1} = h_q^{-2}\left(|\delta b u|_1^2 + \|\delta c u\|^2 + \|\delta f\|^2 + \mu^2 \|\delta I_h u\|^2 \right)$ and $C_{\text{err},2} = h_q^{2(m-2)} |q|_m^2$, we explicitly have that
    \begin{align}\label{eq:est-thm-q-err-1}
    \begin{split}
         \lim_{z\to\infty} \|\delta q(z)\|^2 \leq C &\left(1 + h\sqrt{\mu} + \frac{1}{\sqrt{\mu}}  \right) \\
        + C &C_{q_1,\Omega,\text{pc}}^{-1/2}\left[ C_{\text{err},1} \left(1 + h\sqrt{\mu} + \frac{1}{\sqrt{\mu}}\right) +  C_{\text{err},2} \left(1 + h_q^{2} \left(1 + h\sqrt{\mu} + \frac{1}{\sqrt{\mu}}\right) \right)  \right]^{1/2},
    \end{split}
    \end{align}
    for some $C>0$.
\end{thm}

\subsection{Error estimates for reconstruction of $f$}

We now derive the error estimates for the reconstruction of $f$, following a similar procedure to that of $q$. As in (\ref{eq:idetification-q-discrete}), the spatially discrete scheme for reconstructing $f$ is written as
\begin{equation}\label{eq:idetification-f-discrete}
    \begin{cases}
        - \nabla \cdot (\tilde{q} \nabla v(z)) + \nabla \cdot (\tilde{b} v(z)) + \tilde{c} v(z) = \tilde{f}_{h_f} + \mu \delta I_h u + \mu I_h (u-v(z)),\\
        \int_\Omega \Phi_j \frac{\mathrm{d}}{\mathrm{d} z} \tilde{f}_{h_f} (z) = \frac{1}{\mu} \int_\Omega \Phi_j I_h (u-v(z)), \quad \tilde{f}_{h_f} (0) \in V_{h_f} \text{ the given initial guess},
    \end{cases}
\end{equation}
where $\delta I_h u$ is the error in the data $I_h u$. Here, since we treat the reconstruction error of $f$, the error $\delta I_h u$ cannot be included in $\delta f$ as in Remark \ref{rmk2}, and it must be explicitly expressed. By the Lipschitz continuity (\ref{eq:f-elliptic-lipbound}), (\ref{eq:idetification-f-discrete}) admits a unique solution in $L^2(\Omega)$ for $z\in [0,\infty)$. In parallel to Lemma \ref{lem6}, we have the following lemma.

\begin{lem}\label{lem-deltaf-ineq}
    Suppose $f_{h_f}(z) \in V_{h_f} (z\geq 0)$ satisfies (\ref{eq:idetification-f-discrete}), and $I_{h_f}$ is the $L^2$ projection of $f$ in $V_{h_f}$. Then, with $w(z) = u-v(z)$, $\delta f = \tilde{f}_{h_f} - f$, we have
    \begin{multline*}
        \mu \frac{\mathrm{d}}{\mathrm{d}z} \|\delta f\|^2 + q_0 |w|_1^2 + \mu \|I_h w\|^2 \leq  C \|w\| \left(\|f - I_h f\| +\| \nabla \cdot (\delta q \nabla u) \| + |\delta b u|_1 + \|\delta c u\| + \mu \|\delta I_h u\|\right)\\
        + C h |w|_1 ( \| \delta f \| +\| \nabla \cdot (\delta q \nabla u) \| + |\delta b u|_1 + \|\delta c u\| + \mu \|\delta I_h u\|) + C\|w\| |w|_1,
    \end{multline*}
    where $C>0$ depends on $q_1, b_1, C_P$ and $C_A$.
\end{lem}

\begin{proof}
    From $-\nabla \cdot (\tilde{q} \cdot \nabla w) + \nabla \cdot \tilde{b} w + \tilde{c} w + \mu I_h w + (\tilde{f}_{h_f} - I_{h_f}f) = -\nabla\cdot (\delta q \nabla u) + \nabla \cdot \delta b u + \delta c u + (f - I_{h_f}f) - \mu \delta I_h u$ and (\ref{eq:idetification-f-discrete}), we have
    \begin{align*}
        \int_\Omega \tilde{q} \cdot \nabla w\cdot \nabla I_h w + \nabla \cdot \tilde{b} w I_h w + (\tilde{c}+\mu) I_h w^2 + \delta f_{h_f} I_h w &= \int_\Omega \left(-\nabla\cdot (\delta q \nabla u) + \nabla \cdot \delta b u + \delta c u + f - I_{h_f}f - \mu \delta I_h u\right)I_h w,\\
        \frac{\mu}{2} \frac{\mathrm{d}}{\mathrm{d}z} \|\delta f\|^2 &= \frac{\mu}{2} \frac{\mathrm{d}}{\mathrm{d}z} \|\delta f_{h_f}\|^2 = \int_\Omega \delta f_{h_f} I_h w ,
    \end{align*}
    where $\delta f_{h_f} = \tilde{f}_{h_f} - I_{h_f}f$. Then, proceeding as in Lemma \ref{lem6}, we obtain the conclusion.  
\end{proof}


For the case of reconstruction of $f$, we do not need to use a positivity condition like (\ref{eq:positivity-condition}) to obtain an (nonlinear) estimate of $\|\delta f\|$ from below, and a linear error estimate is available as the following.

\begin{thm}\label{thm-err-f}
    Assume the conditions of Corollary \ref{coro1}, Corollary \ref{coro2} and Lemma \ref{lem-deltaf-ineq}, and that $f \in H^m(\Omega)$ ($m\geq 0$). Assume also that $1/\sqrt{\mu} \leq q_0$. Then, we have
    \begin{equation}
    \begin{aligned}\label{eq:thm-err-f-ineq-est}
        &C \mu^2 \frac{\mathrm{d}}{\mathrm{d}z} \|\delta f (z)\|^2 + \left( \frac{1}{2} - \frac{1}{4} h \sqrt{\mu} - \frac{C}{\sqrt{\mu}} \right) \|\delta f(z)\|^2 \\
        \leq & \, M_1 h_f^{2 m} |f|_m^2 + M_2 \left( 1 + h \sqrt{\mu} + \frac{1}{\sqrt{\mu}} \right) \left( \|\nabla \cdot (\delta q \nabla u)\|^2 + |\delta b u|_1^2 + \|\delta c u\|^2 + \mu^2 \|\delta I_h u\|^2 \right),
    \end{aligned}
    \end{equation}
    where $M_1$ and $M_2$ depend on $\tilde{q}, q_1, b_1, C_P$ and $C_A$. Thus, if $C_0 := 1/2 - h \sqrt{\mu}/4 - C/\sqrt{\mu} > 0$, the error $\|\delta f (z)\|^2$ decays exponentially, and 
    \begin{equation}
    \begin{aligned}\label{eq:thm-err-f}
        \lim_{z\to \infty} \|\delta f (z)\|^2
        \leq &\,C_0^{-1} \left[M_1 h_f^{2 m} |f|_m^2 + M_2 \left( 1 + h \sqrt{\mu} + \frac{1}{\sqrt{\mu}} \right) \left( \|\nabla \cdot (\delta q \nabla u)\|^2 + |\delta b u|_1^2 + \|\delta c u\|^2 + \mu^2 \|\delta I_h u\|^2 \right) \right].
    \end{aligned}
    \end{equation}
\end{thm}

\begin{proof}
    For the error $\delta f$, we have the relation
    \begin{equation*}
        \delta f = \nabla\cdot(\tilde{q} \nabla w) - \nabla \cdot\tilde{b}w + \tilde{c}w + \mu I_h w -\nabla\cdot (\delta q \nabla u) + \nabla\cdot \delta b u + \delta c u + \mu \delta I_h u.
    \end{equation*}
    Taking $L^2$ inner-product with $\delta f$ for both sides and using (\ref{eq:h2-est}) for the first term on the right side, we have
    \begin{align*}
        \|\delta f\|^2 \leq C \mu \left( \frac{1}{\sqrt{\mu}} |w|_1^2 + \mu \|I_h w\|^2  \right) + \frac{C}{\sqrt{\mu}} \|\delta f\|^2 + C \left( \|\nabla \cdot (\delta q \nabla u)\|^2 + |\delta b u|_1^2 + \|\delta c u\|^2 + \mu^2 \|\delta I_h u\|^2 \right) + \frac{1}{4}\|\delta f\|^2.
    \end{align*}
    Then, using $1/\sqrt{\mu} \leq q_0$ to invoke Lemma \ref{lem-deltaf-ineq} and proceeding as in Lemma \ref{lem-deltaq-ineq}, we obtain (\ref{eq:thm-err-f-ineq-est}). The error estimate (\ref{eq:thm-err-f}) directly follows from (\ref{eq:thm-err-f-ineq-est}).
\end{proof}

\section{Numerical results}\label{sect5}

In this section, we present two numerical examples. In all examples, we take $\Omega = (0,1) \times (0,1)$, and uniform triangle meshes in $\Omega$ are used to build reconstructions of $q$ and $f$ using continuous and piecewise-linear functions. The conjugate gradient method described in \cite[Section 3.4]{hasanouglu2021introduction} is used with suitable fixed step sizes (usually taken to be $\mu$) and the approximated gradients in Section \ref{sec:ellip-aprox-grad} to update the reconstructions $\tilde{q}_{h_q}$ and $\tilde{f}_{h_f}$, and iteration is stopped once the error $J(\tilde{q}, \tilde{f})$ starts to increase. The initial guesses of the coefficients are always taken to be constant functions. The computations are conducted in Wolfram Mathematica 13.2, and, in particular, the numerical solutions of our elliptic equations are obtained using the software's built-in functionalities.

For convenience, in the simulations $I_h u$ is replaced by the quadratic Lagrange interpolation $\mathcal{I}_h u$ that is built from the pointwise data of $u$, which is also a more realistic setting since in practice the data of $u$ are measured at discrete points. And instead of solving (\ref{eq:elliptic-2}) we numerically solve
\begin{equation*}
\begin{aligned}
    -\nabla \cdot ( \tilde{q} \nabla v ) + \nabla \cdot (b v) + c v &= f + \mu (\mathcal{I}_h u - v), &&\text{in } \Omega\\
    v &= 0. &&\text{on } \partial \Omega
\end{aligned}
\end{equation*}
The noisy data, denoted $\mathcal{I}_h u^\delta$, will be produced in two ways. The first way is to demonstrate the stability of the proposed method, and the noisy data of $u$ of a noise level $\delta$ is produced by randomly picking
\begin{equation}\label{eq:obs-error}
    u^\delta (x_j) \in [(1-\delta) u(x_j), (1+\delta) u(x_j)],
\end{equation}
with a uniform probability distribution. The noisy pointwise data are then used to construct a noisy $\mathcal{I}_h u^\delta$. The second way is to demonstrate the error estimates in Theorem \ref{thm-q-err} and Theorem \ref{thm-err-f}, and the noisy data is produced by
\begin{equation}\label{eq:obs-error-2}
    \mathcal{I}_h u^\delta = \mathcal{I}_h \left(1 + \delta \sin (\pi x) \sin (\pi y) \right) u(x, y).
\end{equation}
Here, the "components" of the noise is fixed to be $\sin (\pi x) \sin (\pi y)$ without randomness and we only control the amplitude $\delta$, so the trend of the errors in this case better reveal (\ref{eq:thm-err-f}) and (\ref{eq:est-thm-q-err-1}). In the following examples, we shall denote the relative $L^2$ errors by 
\begin{equation*}
        E_q = \|\tilde{q}_{h_q}^\ast - q\|/\|q\|, \qquad E_f = \|\tilde{f}_{h_f}^\ast - f\|/\|f\|,
\end{equation*}
where $\tilde{q}_{h_q}^\ast$ or $\tilde{f}_{h_f}^\ast$ is a corresponding reconstruction.

\subsection{Example 1}\label{ex1}

In the first example, the coefficients are set to be
\begin{equation*}
    \begin{aligned}
        q &= 2 \sin (3 \pi  x) \sin (2 \pi  y)-0.7 x+5,\\
        b &= \left(2 x^2-y^2+3,\exp \left(x^2\right)-2 y \sin (3 x)+3\right)^\intercal,\\
        c &= \exp (0.2 x+0.3 y)+\cos (2 x) \cos (4 y)+2,\\
        f &= 1.5 \sin (6 x-0.4) \cos (3 y+0.6)+6 (y-1) y+6.\\
    \end{aligned}
\end{equation*}
The numerical solution $u$ of (\ref{eq:elliptic-1}) is solved using the finite element method on a mesh of mesh size $1/1{,}024$ with second-order Lagrange elements. See Figure \ref{fig:ex1-true} for the graph of each coefficient and the solution $u$. A quadratic interpolation of $u$ on a uniform triangle mesh with mesh size $1/16$ is used as $I_h u$. Moreover, for this problem, we verify using the numerical solution of $u$ that the positivity condition (\ref{eq:positivity-condition}) holds with $c_{\text{pc}}=0.05$ and $\beta = 1$.

We first test the reconstruction error in different values of $\mu$ in the two situations that (i) $\tilde{b}$ and $\tilde{c}$ taken the true values, and (ii) very rough estimations of $b$ and $c$ are used as $\tilde{b}$ and $\tilde{c}$, namely $\tilde{b}= (3.5,3.5)^\intercal$ and $\tilde{c} = 3$ (see also Figure \ref{fig:ex1-true}). In both situations, $f$ or $q$ is exactly given, and there is no error added to $I_h u$. The results with mesh sizes $h_q = h_f = 1/8$ are given in Table \ref{tab:ex1-mu-err}. The errors for $q$ and $f$ are both relatively low for a wide range of $\mu$, which can be taken roughly from $100$ to $5{,}000$ for $q$ and from $500$ to $5{,}000$ for $f$. One can explain this phenomenon by Theorem \ref{thm-q-err} and Theorem \ref{thm-err-f}, where the constants in the bound of $L^2$ error contain $h \sqrt{\mu} + 1/\sqrt{\mu}$, which reminds one of the classical Tikhonov regularization (see, for example, \cite{kirsch2011introduction}). As a reference, the relative errors of the direct piecewise-linear Lagrange interpolations are $3.46\times10^{-2}$ for $q$ and $0.97\times10^{-2}$ for $f$. Comparing with them, we see our reconstructions are not even much worse than the direct interpolation of true coefficients when $\tilde{b}$ and $\tilde{c}$ are far from the true values.
\begin{figure}
    \centering
    \includegraphics[width=1\linewidth]{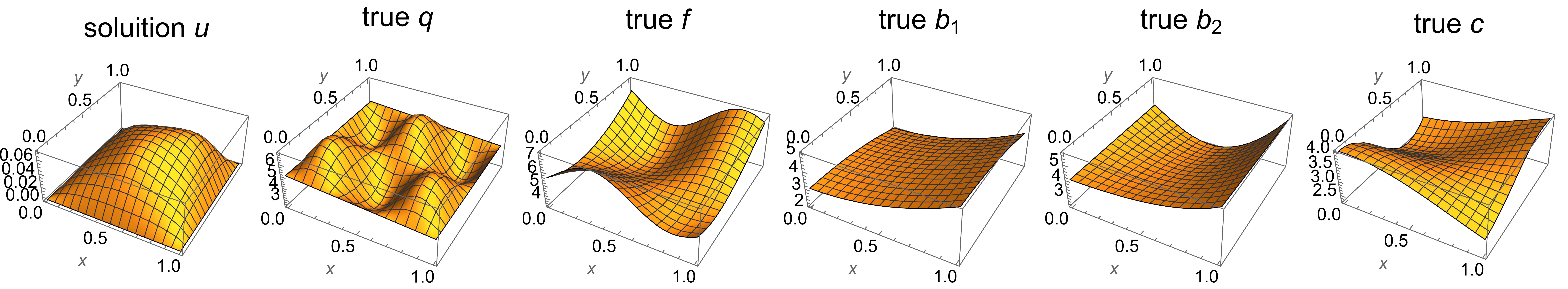}
    \caption{True data for Example 1.}
    \label{fig:ex1-true}
\end{figure}

\begin{table}
    \caption{Error in the reconstruction with $\mu$ varies for Example 1, $h_q = h_f = 1/8$.}
    \centering
    \begin{tabular}{c|ccccccccc}
         \toprule
         \hfill$\mu$&  1.00e1 & 1.00e2 & 2.00e2 & 5.00e2 & 1.00e3 & 5.00e3 & 1.00e4 & 5.00e4\\
         \midrule
         \hfill$E_q (\tilde{b}=b, \tilde{c}=c)$& 9.60e-2 & 3.43e-2  & 2.94e-2 & 2.18e-2 & 2.40e-2 & 3.45e-2 & 5.10e-2 & 1.56e-1\\
         \hfill$E_q (\tilde{b}=(3.5,3.5)^\intercal, \tilde{c}=3)$&  9.53e-2 & 3.64e-2 & 2.81e-2 & 2.70e-2 & 2.95e-2 & 3.50e-2 & 5.23e-2&1.39e-1\\
         \hfill$E_f (\tilde{b}=b, \tilde{c}=c)$ & 6.56e-2& 5.07e-2 & 4.74e-2 & 1.19e-2 & 1.16e-2 & 2.68e-2 & 4.65e-2& 1.44e-1\\
         \hfill$E_f(\tilde{b}=(3.5,3.5)^\intercal, \tilde{c}=3)$ & 6.77e-2& 5.59e-2 & 5.26e-2 & 2.84e-2 & 2.80e-2 & 3.54e-2 &5.12e-2& 1.43e-1\\
         \bottomrule
    \end{tabular}
    \label{tab:ex1-mu-err}
\end{table}

\begin{figure}
    \centering
    \includegraphics[width=.5\linewidth]{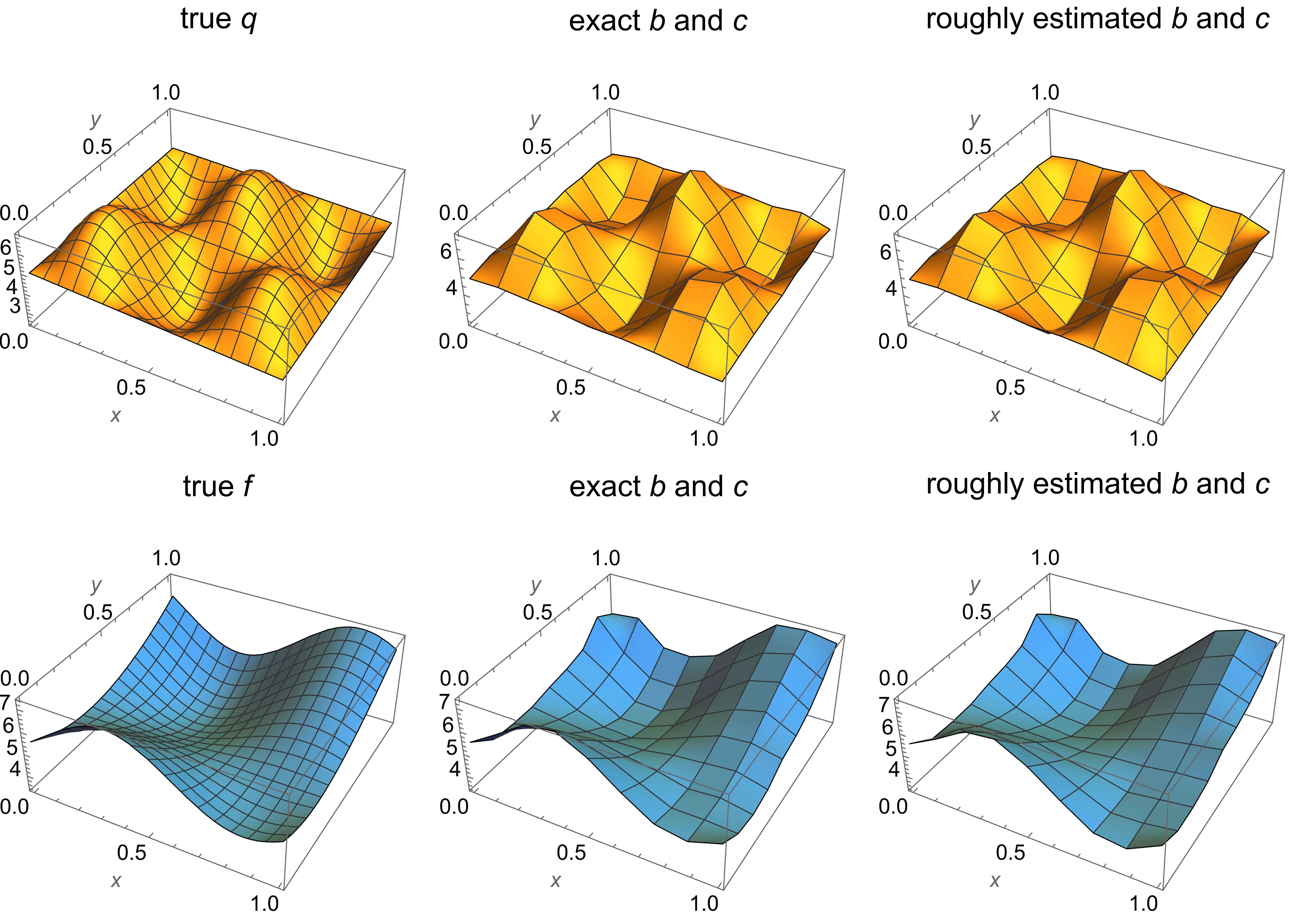}
    \caption{Firs row: the true $q$ and reconstructed $q$ with exact $b$ and $c$ or with $\tilde{b}= (3.5,3.5)^\intercal$ and $\tilde{c} = 3$; second row: the true $f$ and reconstructed $f$ with exact $b$ and $c$ or with $\tilde{b}= (3.5,3.5)^\intercal$ and $\tilde{c} = 3$. $\mu=1{,}000$ in all reconstructions here. The wireframes in the plots of reconstructions reflect the mesh sizes $h_q = h_f = 1/8$, while those in the plots of true coefficients are for the clarity of their profiles.}
    \label{fig:ex1-rec-q-f-mu}
\end{figure}
Next, we test the reconstruction error for different levels $\delta$ of random noise in $\mathcal{I}_h u^\delta$ produced by (\ref{eq:obs-error}). We use $h_q = h_f = 1/4$ and $\mu=1{,}000$ for the reconstruction of $q$ and $\mu = 1{,}200$ for the reconstruction of $f$ in computations with $\delta$ increasing from $0$ to $0.1$, or, $10\%$. The results are shown in Figure \ref{fig:ex1-err-delta}. We see that the decay of the reconstruction errors of $q$ and $f$ is basically linear as $\delta\to0$. This shows the stability of the reconstruction method, and the almost linear decay of the reconstruction errors for $q$ is more optimal than the sub-linear decay predicted by Theorem \ref{thm-q-err}.


\begin{figure}
    \centering
    \includegraphics[width=.75\linewidth]{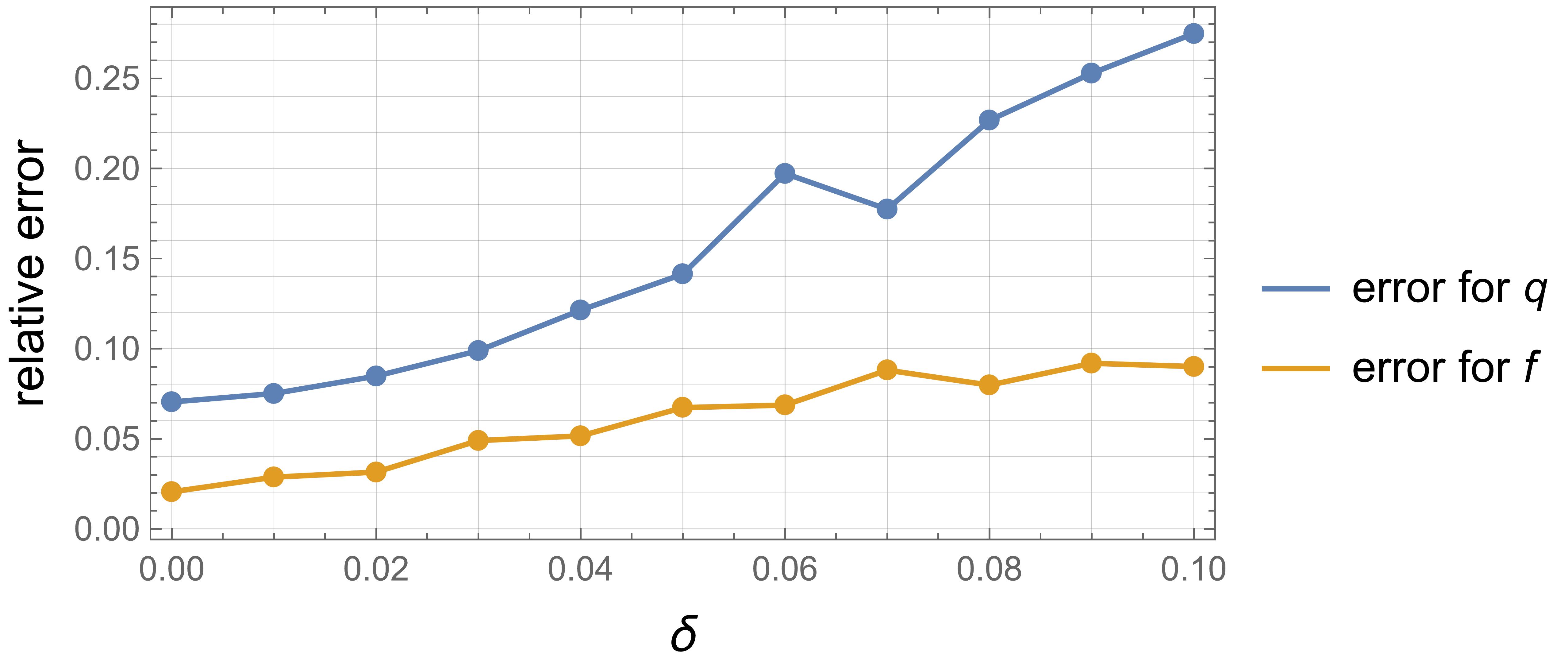}
    \caption{Relative $L^2$ errors for $q$ and $f$ with noisy data produced by (\ref{eq:obs-error}), Example 1.}
    \label{fig:ex1-err-delta}
\end{figure}

Then, we consider the noisy data $\mathcal{I}_h u^\delta$ produced by (\ref{eq:obs-error-2}). The mesh size $h$ of the observation is taken to be $1/128$ in this case to suppress the discretization error in $\mathcal{I}_h u$, and the mesh sizes $h_q$ and $h_f$ for reconstruction are both set to be $1/4$. The amplitude of the noise $\delta$ is taken from $0$ to $0.65$ with a spacing of $0.05$, and the parameter $\mu$ in computations ranges from $1$ to $5{,}000$. The results are shown in Figure \ref{fig:ex1-sin-err}, where on the horizontal axis we denote $\delta_{L^2} = \|\mathcal{I}_h u^\delta - \mathcal{I}_h u\|/\|\mathcal{I}_h u\|$ as the relative $L^2$ error in the given data. Sub-linear decay rates are seen in the results for $q$, while the decay rates in the results for $f$ are basically linear, especially for large values of $\mu$. This demonstrates the sub-linear error estimate in Theorem \ref{thm-q-err} and the linear error estimate in Theorem \ref{thm-err-f}. In addition, the behaviors of the method with different values of $\mu$ are clearly exhibited in Figure \ref{fig:ex1-sin-err}, which show that relatively large values of $\mu$ produce much better reconstructions under a reasonable noise level. However, Theorem \ref{thm-q-err} with $\beta = 1$ only gives a sub-optimal error estimate in this example. The decay rates of errors for $q$ in Figure \ref{fig:ex1-sin-err} are approximately $0.6$ for large $\mu$, while Theorem \ref{thm-q-err} with $\beta = 1$ predicts $1/(1+\beta) = 0.5$ (see (\ref{eq:est-thm-q-err-1})). Comparing Figure \ref{fig:ex1-sin-err} with Figure \ref{fig:ex1-err-delta}, we also see that the decay of the error with respect to the data error behaves almost linearly except for considerably large data errors.

\begin{figure}
    \centering
    \includegraphics[width=.85\linewidth]{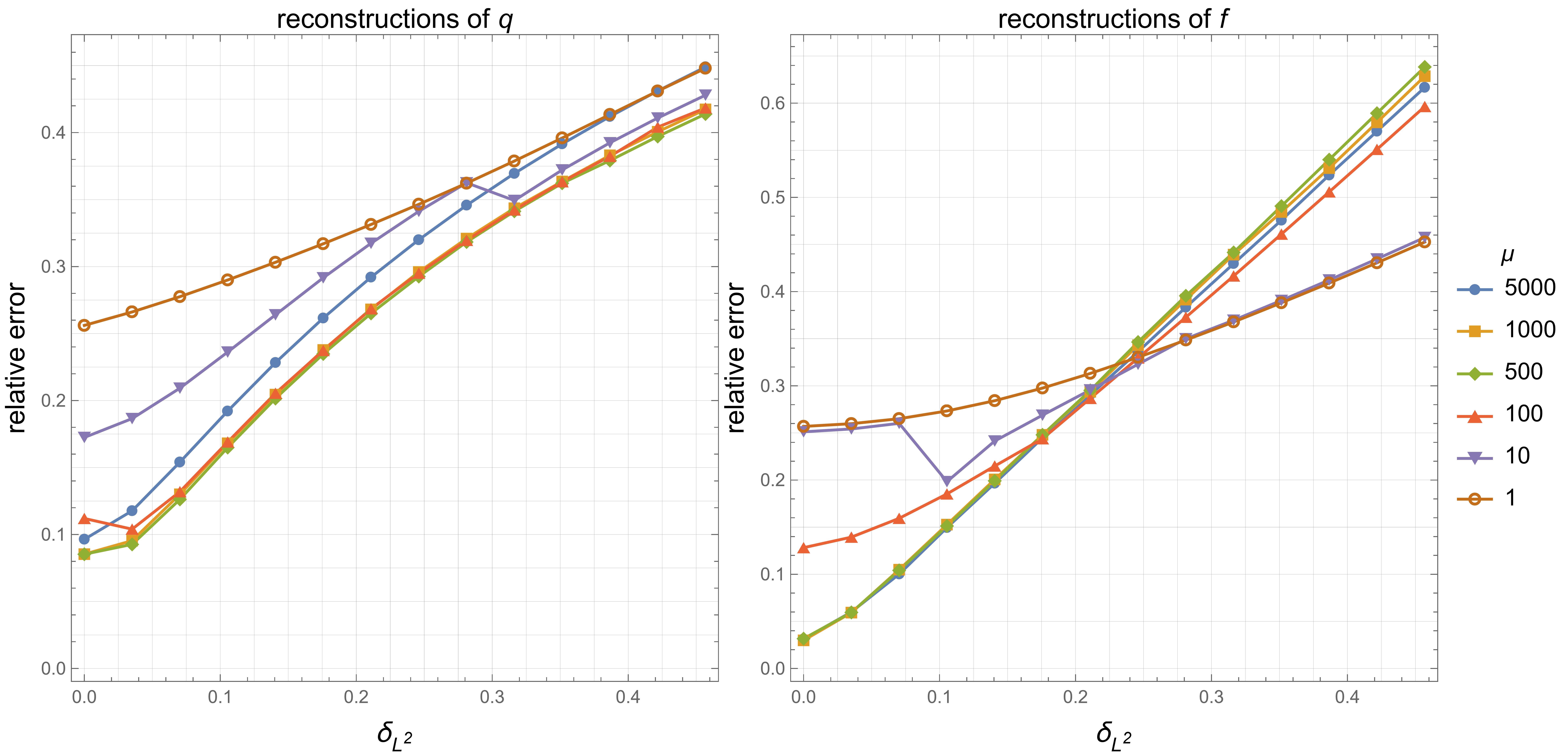}
    \caption{Relative reconstruction errors for $q$ and $f$ with noisy data produced by (\ref{eq:obs-error-2}), Example 1.}
    \label{fig:ex1-sin-err}
\end{figure}

However, in general, the finer meshes used for the reconstruction do not work very well, which reveals the ill-posedness of the problem. For example, with a mesh size of $h_q = h_f = 1/16$ and $\mu = 500$ with exact $b$, $c$, $f$ and $\mathcal{I}_h u$ ($h=1/16$), the reconstruction of $q$ is oscillatory, and the the reconstruction of $f$ inexact near the boundary of the domain; see Figure \ref{fig:ex1-q-16}. Here, referring to Theorem \ref{thm-q-err}, we see that the increase of $h_q^{-1}$ results in the growth of $\xi^\ast$ in (\ref{eq:est-thm-q-err}). For the reconstruction of $f$, in Theorem \ref{thm-err-f} the estimated reconstruction error bound for $f$ is not affected by $h_f^{-1}$ (recall that no inverse estimate is used to prove Theorem \ref{thm-err-f}), and the reconstruction is smooth in the inner part of the domain. However, the accuracy near the boundary is not satisfactory. The corresponding $L^2$ relative errors are $E_q = 4.40\times 10^{-2}$ and $E_f = 6.28 \times 10^{-2}$, which are not better than the result when $h_q = h_f = 1/8$. 

\begin{figure}
    \centering
    \includegraphics[width=.8\linewidth]{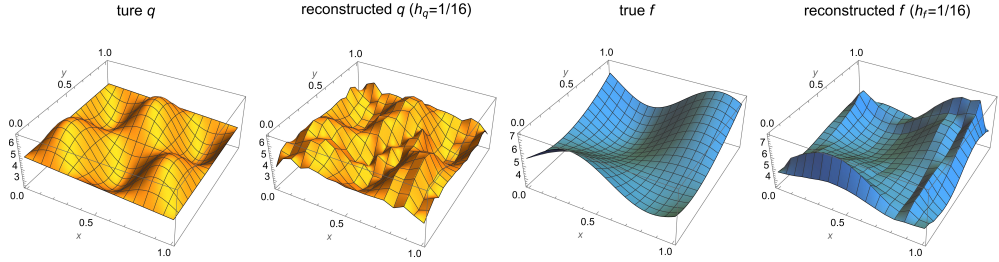}
    \caption{True coefficients and the reconstructed coefficients using $h_q = h_f = 1/16$, Example 1.}
    \label{fig:ex1-q-16}
\end{figure}

\subsection{Example 2}

In this second example, we set the following data:
\begin{equation*}
    \begin{aligned}
        q &= \sin (5 y)+4 + \begin{cases}
            20 x^2+1 & x<0.4 \\
            15 (x-1)^2-1.2 & x\geq 0.4\end{cases},\\
        b &= \left(-2 \sin ^2(2 x)+y^2+4, -x^2-2 y \sin (3 y)+4\right)^\intercal,\\
        c &= (x-1)^2+\sin (2 x)+y^2+1,\\
        f &= 5 \exp \left(-10 (x-0.5)^2-10 (y-0.5)^2\right)+1.8 \exp (-x) \exp (0.5 y)+2.
        \end{aligned}
\end{equation*}
See Figure \ref{fig:ex2-true} for their profiles. The numerical solution of $u$ and its interpolation $\mathcal{I}_h u$ are produced with the same procedure as in the first example. Note that $q$ is not smooth on $x=0.4$ in this example. For this problem, positivity condition (\ref{eq:positivity-condition}) holds with $c_{\text{pc}}=0.05$ and $\beta = 1$.

The reconstruction errors for $q$ and $f$ in different values of $\mu$ are given in Table \ref{tab:ex2-mu-err}. $h_q$ and $h_f$ are set to be $1/8$, with inputted $\tilde{b} = b$, $\tilde{c} = c$ and $\tilde{b} = (2.5, 2.5)^\intercal$, $\tilde{c} = 2.8$, respectively. As a reference, first-order Lagrange interpolation errors for $q$ and $f$ are respectively $2.41 \times 10^{-2}$ and $1.45 \times 10^{-2}$. In this example, we also see that reconstructions using very rough estimates of $b$ and $c$ can still yield satisfying accuracy. Figure \ref{fig:ex2-rec-q-f-mu} shows the graphs of the true coefficients and the reconstructions. As can be seen, there is almost no difference between the reconstructions using the exact $b$ and $c$ and those using only rough estimations of $b$ and $c$.

\begin{figure}
    \centering
    \includegraphics[width=1\linewidth]{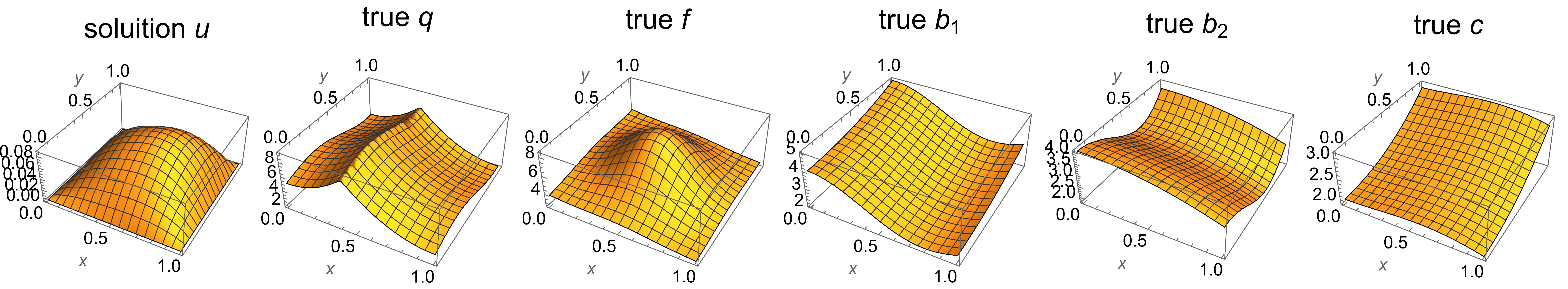}
    \caption{True data for Example 2.}
    \label{fig:ex2-true}
\end{figure}

\begin{table}
    \caption{Error in the reconstruction with $\mu$ varies for Example 2, $h_q = h_f = 1/8$.}
    \centering
    \begin{tabular}{c|ccccccccc}
         \toprule
         \hfill$\mu$& 1.00e1 & 1.00e2 & 2.00e2 & 5.00e2 & 1.00e3 & 5.00e3 & 1.00e4 & 5.00e4\\
         \midrule
         \hfill$E_q (\tilde{b}=b, \tilde{c}=c)$& 6.37e-2 & 4.71e-2  & 1.83e-2 & 1.83e-2 & 1.72e-2 & 2.08e-2 & 2.51e-2 & 5.68e-1\\
         \hfill$E_q (\tilde{b}=(2.5,2.5)^\intercal, \tilde{c}=2.8)$& 7.67e-2 & 6.30e-2 & 4.83e-2 & 4.70e-2 & 4.66e-2 & 4.77e-2 & 5.17e-2&8.94e-1\\
         \hfill$E_f (\tilde{b}=b, \tilde{c}=c)$ & 1.63e-1& 8.70e-2 & 5.73e-2 & 1.64e-2 & 1.71e-2 & 1.79e-2 & 2.13e-2& 1.44e-1\\
         \hfill$E_f(\tilde{b}=(2.5,2.5)^\intercal, \tilde{c}=2.8)$ & 1.63e-1& 9.50e-1 & 7.35e-2 & 5.92e-2 & 5.92e-2 & 5.99e-2 &5.91e-2& 9.62e-1\\
         \bottomrule
    \end{tabular}
    \label{tab:ex2-mu-err}
\end{table}

\begin{figure}
    \centering
    \includegraphics[width=.5\linewidth]{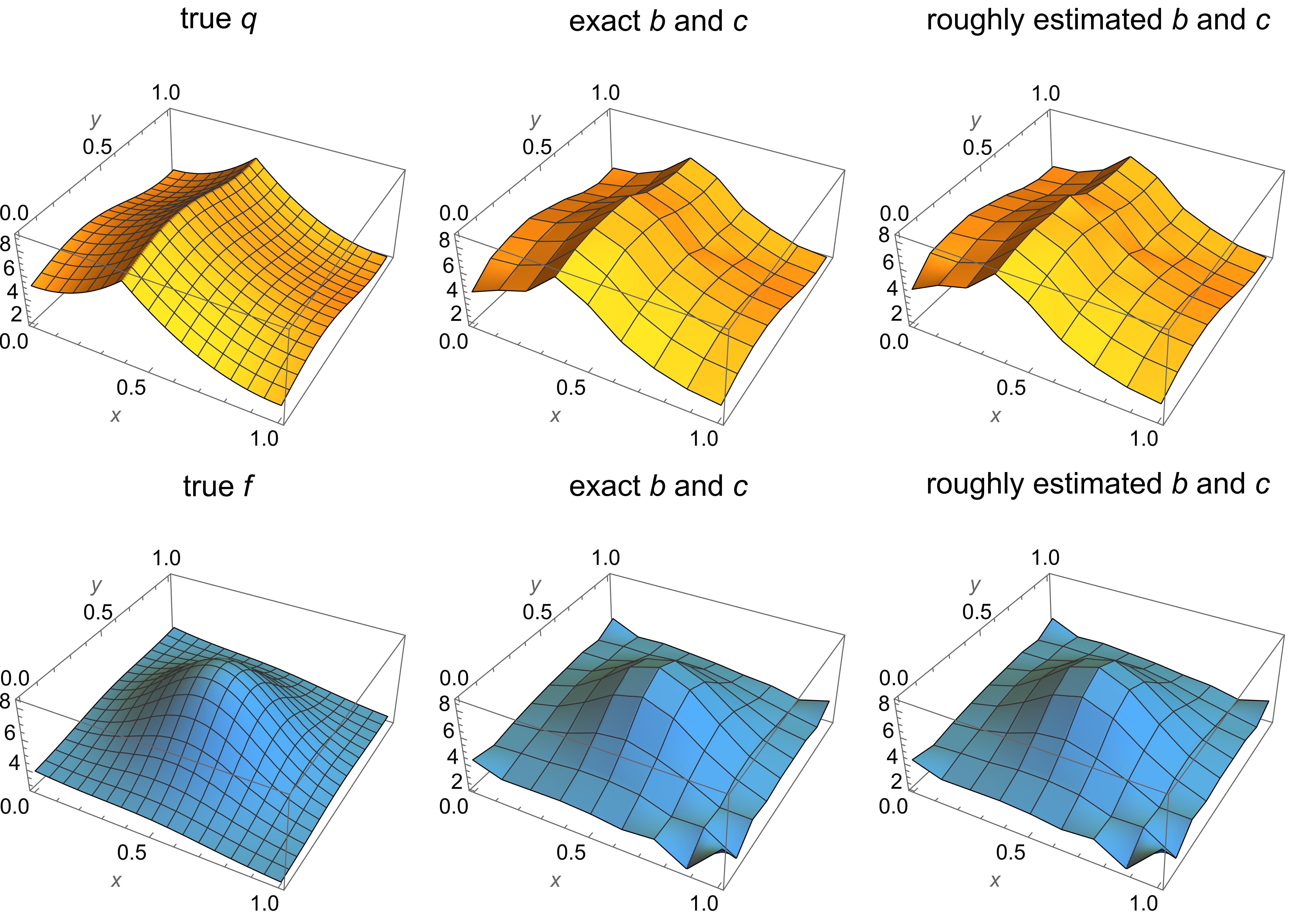}
    \caption{Firs row: the true $q$ and reconstructed $q$ with exact $b$ and $c$ or with $\tilde{b}= (2.5,2.5)^\intercal$ and $\tilde{c} = 2.8$; second row: the true $f$ and reconstructed $f$ with exact $b$ and $c$ or with $\tilde{b}= (2.5,2.5)^\intercal$ and $\tilde{c} = 2.8$. $\mu = 1{,}000$ in all the reconstructions here. The wireframes in the plots of reconstructions reflect the mesh sizes $h_q = h_f = 1/8$.}
    \label{fig:ex2-rec-q-f-mu}
\end{figure}

Reconstruction errors are also investigated at different levels $\delta$ of random noise in $\mathcal{I}_h u$ produced by (\ref{eq:obs-error}), and the results are shown in Figure \ref{fig:ex2-err-delta}. The mesh sizes are $h_q = h_f = 1/4$, and $\mu=500$ for $q$ and $\mu=1{,}000$ for $f$. As shown in Figure \ref{fig:ex2-err-delta}, the reconstruction errors decay basically linearly with the decrease of the noise level $\delta$, indicating the stability of the method.


\begin{figure}
    \centering
    \includegraphics[width=.75\linewidth]{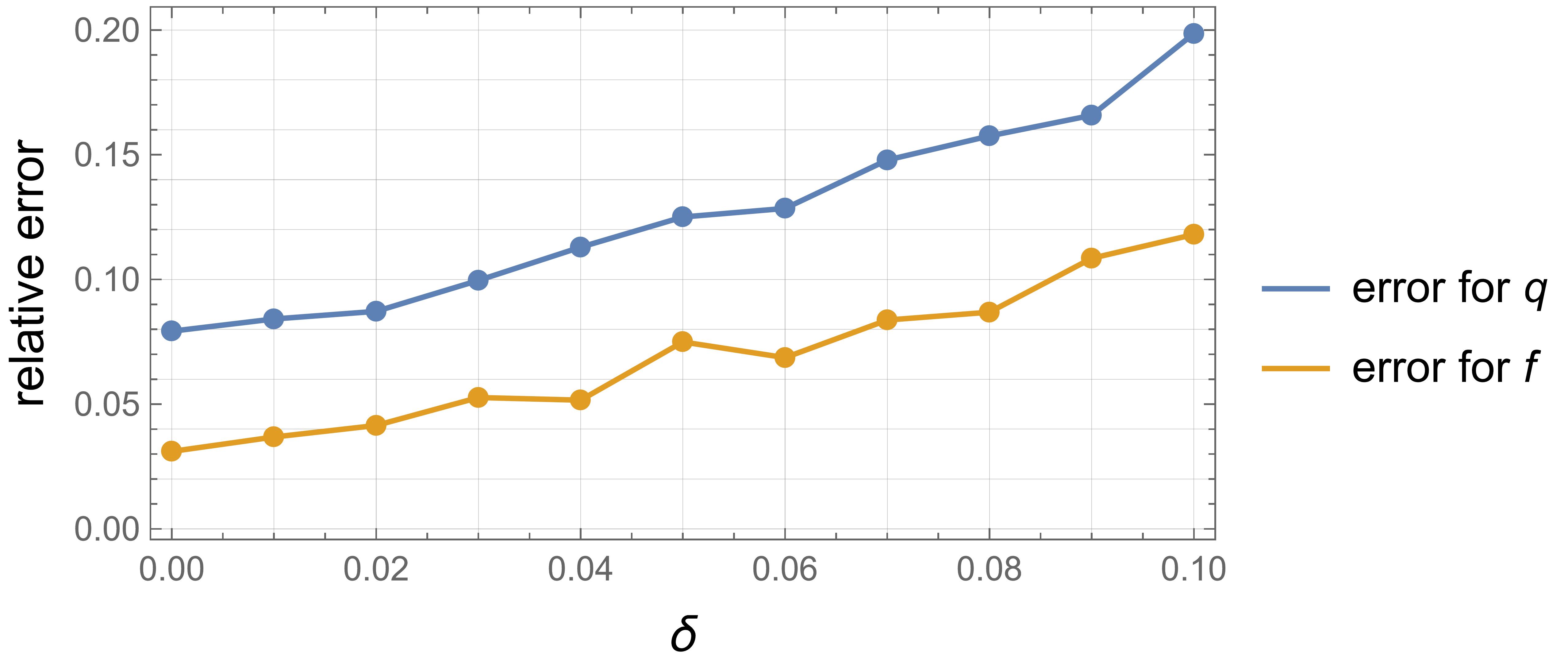}
    \caption{Relative $L^2$ errors for $q$ and $f$ with noisy data produced by (\ref{eq:obs-error}), Example 2.}
    \label{fig:ex2-err-delta}
\end{figure}

The errors with noisy data produced by (\ref{eq:obs-error-2}) are shown in Figure (\ref{fig:ex1-sin-err-2}). The comments for Figure \ref{fig:ex1-sin-err} apply here as well, except that the reconstructions of $f$ using small values of $\mu$ ($1$ and $10$) are relatively accurate, although still with greater errors than large values of $\mu$, in this case. 

\begin{figure}
    \centering
    \includegraphics[width=.85\linewidth]{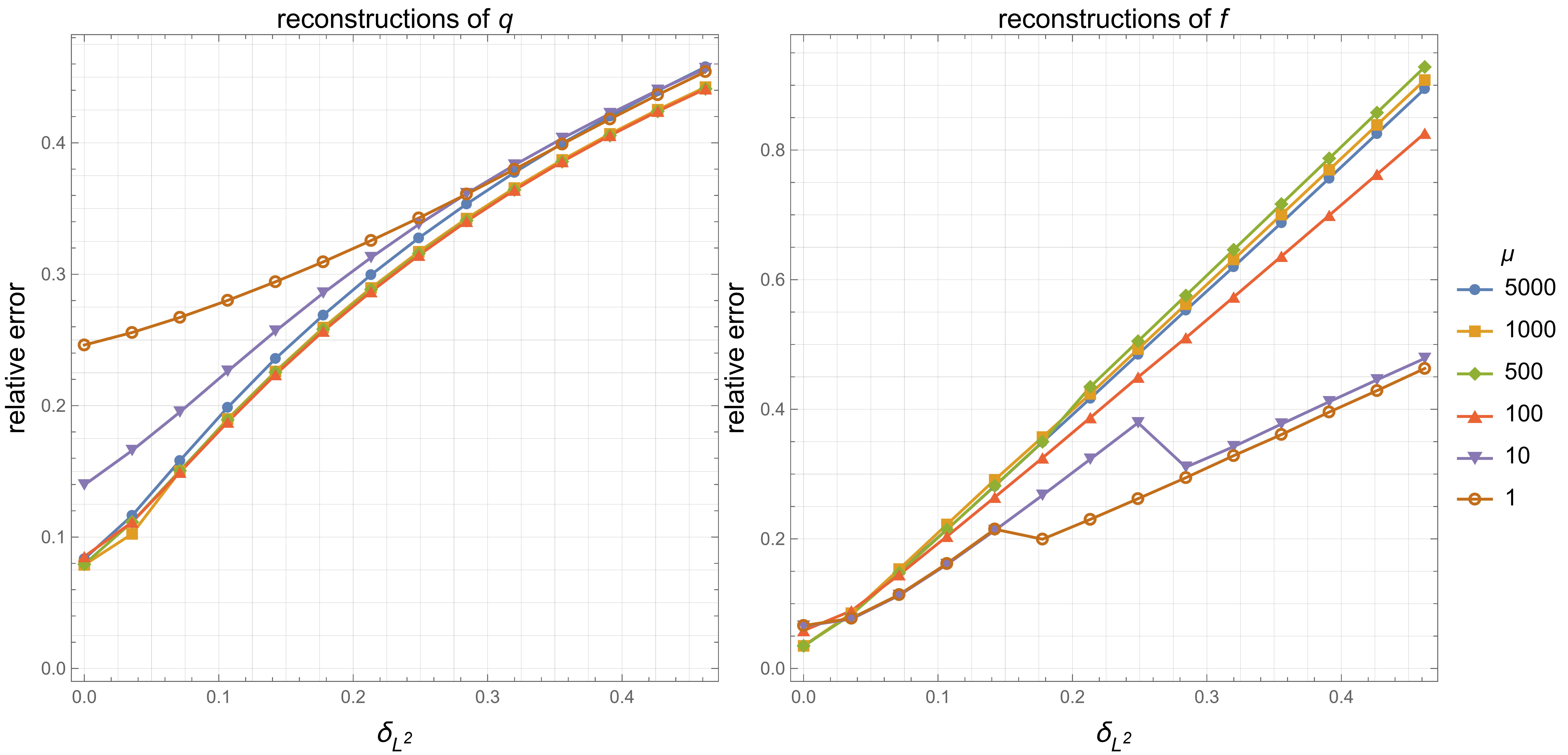}
    \caption{Relative reconstruction errors for $q$ and $f$ with noisy data produced by (\ref{eq:obs-error-2}), Example 2.}
    \label{fig:ex1-sin-err-2}
\end{figure}

\section{Conclusions}

In this work, we present a method for reconstructing coefficients (and the source term) in the elliptic partial differential equation. The error estimates are also derived and verified by numerical experiments. These estimates provide guidelines for selecting parameters in the computations for the reconstruction problem. In addition, the numerical results show that the proposed method performs well in the face of inaccurately inputted coefficient data, which is particularly useful when little knowledge is available about those coefficients.

We are also interested in some further work. For example, how to simultaneously identify $q$ and $f$ or other coefficients; the application of the method to nonlinear elliptic equations; how to refine the technique to make it work better on fine meshes for reconstruction. It is also natural to apply the technique to parabolic equations and investigate the accuracy of reconstruction, as well as the analogues of Theorem \ref{thm-q-err} and Theorem \ref{thm-err-f}.

\bibliographystyle{elsarticle-num}


\end{document}